\definecolor{myColor}{HTML}{e74c3c}
\let\originaldisplaystyle\displaystyle \renewcommand\displaystyle{\color{black}\originaldisplaystyle}
\newcommand{\proposition}[1]{
\begin{addmargin}[2em]{2em}
\vspace{5px}
#1
\vspace{5px}
\end{addmargin}}
\theoremstyle{definition}
\newtheorem{defn}{Definition}
\newtheorem{ax}{Axiom}
\newtheorem*{note}{Note}
\newtheorem*{summary}{Summary}
\newtheorem{lem}{Lemma}
\newtheorem{theo}{Theorem}
\newtheorem{proofpart}{Part}[theo]
\numberwithin{equation}{section}
\newtheorem*{remark}{Remark}
\definecolor{dkgreen}{rgb}{0,0.6,0}
\definecolor{gray}{rgb}{0.5,0.5,0.5}
\definecolor{mauve}{rgb}{0.58,0,0.82}
\newcommand{\reff}[1]{(\ref{#1})}
\newcommand*{\Scale}[2][4]{\scalebox{#1}{$#2$}}%
\tiny\color{gray},
\newcommand\undervec[1]{\underaccent{\vec}{#1}}
\newtheoremstyle{c}{}{}{}{}{\bfseries}{.}{.5em}{#1 to #3}
\theoremstyle{c}
\newtheorem*{prelude}{Prelude}
\title{A weak reduction of the Erd{\"o}s-Szekeres conjecture into a constraint unsatisfiability problem regarding certain multisets}
\author{\color{black} Archy Will He}
\date{7th Nov 2015}
\begin{document}
\maketitle
\begin{abstract}
We introduce the theory of \textit{div point sets}, which aims to provide a framework to study the combinatoric nature of any set of points in general position on an Euclidean plane. We then show that proving the unsatisfiability of some first-order logic formulae concerning some sets of multisets of uniform cardinality over boolean variables would prove the Erd{\"o}s-Szekeres conjecture, which states that for any set of $2^{n-2}+1$ points in general position, there exists $n$ points forming a convex polygon, where $n \geq 3$.
\end{abstract}

\section{Introduction}
In early 20th century Erd{\"o}s and Szekeres \cite{erdos1935} showed that for all $n \geq 3$, there exists some integer $N \geq n$ such that, among any $N$ points in general position on an Euclidean plane, there are $n$ points forming a convex polygon, and conjectured that the smallest value for such $N$ is determined by the function $g$ where $g(n)=2^{n-2}+1$. This is now known as the Erd{\"o}s-Szekeres conjecture (and the problem of determining the smallest $N$ is often referred to as the \textit{Happy Ending Problem} since it led to the marriage of Szekeres and Klein, who first proposed the question). In their second paper, Erd{\"o}s and Szekeres \cite{erdos1960} showed that $g(n)$ is certainly greater than $2^{n-2}$. Currently, the best known bounds are
$$2^{n-2}+1 \leq g(n) \leq {2n-5 \choose n-2} +1$$
Throughout the decades many improvements for the upper bound have been made. The current upper bound was obtained by T{\'o}th and Valtr \cite{toth1998} in 1998 as an improvement to the previous one by Kleitman and Pachter \cite{kleitman1998} in the same year.

In 2002, using an exhaustive search algorithm, Szekeres and Peters \cite{szekeres2006} were able to demonstrate that the conjecture holds for $n=6$. To this day it remains the greatest $n$ for which we know for certainty that the smallest $N$ is indeed $2^{n-2}+1$.

The aim of this article is to show that for every $n \geq 5$, there exists an instance of a constraint unsatisfiability problem, which, if solved (i.e. proving that some FOL propositions about certain multisets are unsatisfiable), would prove that the conjecture holds for $n$, through the theory of \textit{div point sets}.

\subsection{preliminary}
Throughout the article we would assume Zermelo-Fraenkel set theory (ZF). The word \textit{class} would be used to denote a collection of sets satisfying some predicate $\phi$. Everything would be formulated under first order logic (FOL). $\mathbb{N}_{\geq c}$ would be used to refer to the set of natural numbers greater or equal to some $c \in \mathbb{N}$. For any 2 natural numbers $a$, $b$, $\binom{a}{b}$ denotes the binomial coefficient $a \; choose \; b$. $\land$,$\lor$,$\neg$,$\Rightarrow$ and $\Leftrightarrow$ would be used to mean \textit{and}, \textit{or}, \textit{not}, \textit{imply} and \textit{iff} respectively. We write $A := B$ to mean $A$ is defined to be equivalent to $B$. $\forall x_1\in A \; \forall x_2 \in A \; \forall x_3 \in A \; ... \forall x_n \in A$ would be abbreviated to
\[
\forall x_1,x_2,x_3 ... x_n \in A
\]
and  $\exists x_1 \in A \; \exists x_2 \in A \; \exists x_3 \in A \; ... \exists x_n \in A $ to
\[
\exists x_1,x_2,x_3 ... x_n \in A
\]
For any set $V$, $|V|$ would be used to denote its cardinality. When a set $V$ has a cardinality of $k$, we may describe it as a \textit{$k$-cardinality set}. $\mathcal{P}(V)$ would be used to denote its power set.
The subscript of a set union or intersection may be omitted to indicate that the union or intersection is applied to each element in the set i.e.
\begin{align*}
&\text{For any set } A \\
& \bigcup A = \bigcup_{a \in A} a =  \bigcup_{k=1}^n a_k  = a_1 \cup a_2 \cup .. \cup a_n \\
& \bigcap A = \bigcap_{a \in A} a =  \bigcap_{k=1}^n a_k = a_1 \cap a_2 \cap .. \cap a_n \\
&\text{where } |A| = n \text{ and } a_1, a_2 ... a_n \text{ are unique elements of } A
\end{align*}
This use of notation applies to $\bigvee$ and $\bigwedge$ as well:
\begin{align*}
&\text{For any set of formulae } A_L \\
& \bigvee A_L = \bigvee_{a \in A_L} a =  \bigvee_{k=1}^n a_k = a_1 \vee a_2 \vee .. \vee a_n \\
& \bigwedge A_L = \bigwedge_{a \in A_L} a = \bigwedge_{k=1}^n a_k = a_1 \wedge a_2 \wedge .. \wedge a_n \\
&\text{where } |A_L| = n \text{ and } a_1, a_2 ... a_n \text{ are unique formulae in } A_L
\end{align*}
For any $k$-tuple $T$,  $\pi_i(T)$ would be used to denote the $i$-th element of $T$ where $i \leq k$ e.g.
\begin{align*}
A = (\pi_1(A),\pi_2(A)) \quad
\end{align*}
where $A$ is a 2-tuple (often referred to as an ordered pair).

To avoid ambiguity, for any function $f : X \longrightarrow Y$, we would use $f^{members}$ to denote a new function, from $\mathcal{P}(X)$ to $\mathcal{P}(Y)$, such that
\begin{align*}
&\forall A \in \mathcal{P}(X) \\
&\qquad f^{members}(A) = \bigcup_{a \in A} \{ f(a)\}
\end{align*}
Here is a generalization of it, $f^{members^n}$, defined recursively:
\begin{gather*}
\begin{split}
&f^{members^1} \coloneqq f^{members} \\
&f^{members^n}(x) = \bigcup_{a \in x} \{ f^{members^{n-1}}(a)  \}  \text{ where } n \in \mathbb{N}_{\geq 2}
\end{split}
\end{gather*}

Intuitively, multiset can be viewed as a generalization of set, where the same element can occur multiple times. Two multisets are the same iff both multisets contain the same distinct elements and every distinct element occurs the same number of times in both multisets. More formally, a multiset is defined as an ordered pair $(A,m_{\mathfrak{m}})$ where $m_{\mathfrak{m}} : A \longrightarrow \mathbb{N}_{\geq 1}$ describes the number of occurrences of each element in the multiset, and $A$ is the set of all distinct elements in the multiset. The cardinality of a multiset $(A,m_{\mathfrak{m}})$ is the sum of all $m_{\mathfrak{m}}(x)$ for $x \in A$. Multisets are expressed using square brackets. Here is an example: let $f$ be a function that always outputs 1,
\begin{align*}
&[f(x) : x \in \mathbb{N}_{\geq 1} : x \leq 3] = [1,1,1] = (\{1\},\{(1,3)\}) \\
&\text{as compared to} \\
&\{f(x) : x \in \mathbb{N}_{\geq 1} : x \leq 3\} = \{1\}
\end{align*}

Intuitively, hypergraph can be viewed as a generalization of graph, where an edge can contain any number of vertices. A hypergraph is defined as an ordered pair $(V,E)$ where $E$ is a subset of $\mathcal{P}(V) \setminus \varnothing$. Elements in $V$ are referred to as vertices while elements in $E$ are referred to as edges or hyperedges. A hypergraph is $k$-uniformed when all of its hyperedges have the same cardinality. A graph in the conventional sense can thus be defined as a 2-uniformed hypergraph.

A full vertex coloring on some hypergraph $(V,E)$ is defined as a function, $C : V \longrightarrow cDom$, where $cDom$ is a non-empty subset of $\mathbb{N}$, often referred to as the set of colors. When $|Dom|=2$, we say the coloring is monochromatic. We would use $FullCol(G,cDom)$ to denote the set of all possible full vertex colorings on a hypergraph $G$ of the set of colors $cDom$. That is to say, for any hypergraph $G$ of $n$ vertices and any $cDom$,
\begin{align*}
|FullCol(G,cDom)| = n^{|cDom|}
\end{align*}

The boolean satisfiability problem (SAT) is the problem of determining if there exists some value-assignment for the variables in a propositional logic formula such that it yields $True$ i.e. it is satisfiable.

A formula is referred to as a tautology when there exists no value-assignment for the variables such that it yields $False$ e.g. $a \lor \neg a$.

We say that a formula is in \textit{Disjunctive Normal Form} (DNF) when it is a disjunction of conjunctions. Let $S$ be a set of formuale, a disjunction is a formula that can be expressed as $\bigwedge S$, while a conjunction is a formula that can be expressed as $\bigvee S$.

\section{\textit{Div point set} as a representation for any set of points in general position on an Euclidean plane}

We start off by introducing an object which we would be referring to as \textit{div point set}.

\begin{defn} A  \textit{div point set} is any ordered pair $(P,\Theta_P)$ satisfying
\begin{alignat}{2}
  \label{def1_1}
  &\mathrlap{\lvert\Theta_P\rvert = \binom{\lvert P\rvert}{2} \land P \not= \varnothing} \\[1.5ex]
  \label{def1_2}
  & \forall D_n \in \Theta_P & \quad & \begin{aligned}[t] \renewcommand\arraystretch{1.25}\begin{array}[t]{|@{\hskip0.6em}l} \color{black}
  D_n \text{ is an ordered pair. } \\
  d_n \coloneqq \pi_1(D_n) \\
  \delta_n \coloneqq \pi_2(D_n) \\
  \lvert d_n\rvert = \lvert\delta_n\rvert =2\\
  d_n \in \mathcal{P}(P)\\
  \bigcup \delta_n = P \setminus d_n \\
  \bigcap \delta_n = \varnothing
  \end{array}
  \end{aligned}\\[1.5ex]
  \label{def1_3}
    & \forall D_n, D_m \in \Theta_P &\quad & D_n=D_m \Leftrightarrow  \pi_1(D_n)= \pi_1(D_m)
\end{alignat}
We would be using $\mathscr{DPS}^*$ to denote the class of all ordered pairs satisfying \reff{def1_1}, \reff{def1_2} and \reff{def1_3} i.e. $\mathscr{X}$ is a \textit{div point set} iff $\mathscr{X} \in \mathscr{DPS}^*$.
\end{defn}

\begin{remark}
Since $\lvert\Theta_P\rvert = \binom{\lvert P\rvert}{2}$ and,  for every $D \in \Theta_P$, $|\pi_1(D)| = 2$ and $\pi_1(D) \in \mathcal{P}(P)$, by \reff{def1_3}, we can conclude that
\begin{align}\label{power_set_car_2_as_divider}
\bigcup_{D \in \Theta_P} \pi_1(D) = \{ d \in \mathcal{P}(P): |d| = 2\}
\end{align}
\end{remark}
\begin{prelude}[Axiom 1]
For any $n$ points in general position in $\mathbb{E}^2$, where $n \geq 2$, we can always select 2 arbitrary points and draw a line across them, dividing the remaining $n-2$ points into 2 disjoint sets. We shall refer to these 2 disjoint sets as \textit{divs} produced by a \textit{divider} made up of the 2 points, and the points in the \textit{divs} as \textit{TBD points} (short for \textit{to-be-distributed-among-divs}).

Any set of points $P$ in general position on an Euclidean plane where $\lvert P\rvert \geq 2$ can be represented by some \textit{div point set} $(P,\Theta_P)$: we shall refer to each $D_n \in \Theta_P$ as a \textit{dividon}, to be interpreted as follows:
\begin{equation}
        \begin{aligned}
 &d_n \coloneqq \pi_1(D_n) & \quad & \begin{aligned}[t] \renewcommand\arraystretch{1.25}\begin{array}[t]{|@{\hskip0.6em}l}
   \text{Let the 2 elements in } d_n \text{ be } a, b \\
 a \text{ and } b \text{ represent the 2 points making up the \textit{divider}}   \end{array}  \end{aligned} \\
  &\delta_n \coloneqq \pi_2(D_n) & \quad & \begin{aligned}[t] \renewcommand\arraystretch{1.25}\begin{array}[t]{|@{\hskip0.6em}l}
  \text{Let the 2 elements in } \delta_n \text{ be } div_1, div_2\\
 div_1 \text{ and } div_2 \text{ represent the 2 \textit{divs} produced by the \textit{divider}}  \end{array}  \end{aligned}
 \end{aligned}
 \end{equation}
Basically, each \textit{dividon} describes the relative positions of the corresponding \textit{TBD points} in terms of how they are distributed between the 2 \textit{divs} produced by each \textit{divider}.

\begin{figure}[b]
\centering
\includegraphics[height=5cm]{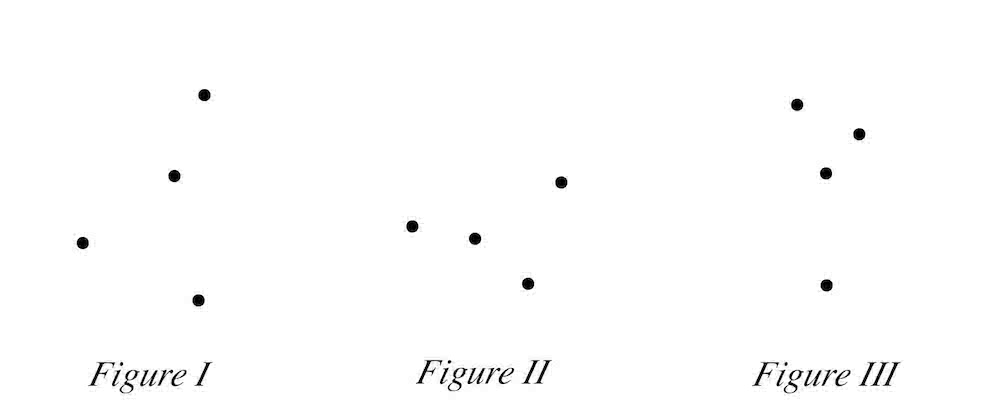}
\end{figure}

The sets of points in \textit{Figures I, II} and \textit{III} can be represented by any \textit{div point set} $(A, \Theta_A)$ as long as $A$ is a set of 4 elements $a$, $b$, $c$, $d$ and
\begin{align*}
    \Theta_{A} = & \{(\{a,b\},\{(\{c\},\{d\}\}), \\
    &\;\; (\{a,c\},\{(\{b\},\{d\}\}), \\
    &\;\; (\{a,d\},\{(\{b\},\{c\}\}), \\
    &\;\; (\{b,c\}, \{(\{a,d\},\varnothing\}), \\
    &\;\; (\{b,d\},\{(\{a,c\},\varnothing\}), \\
    &\;\; (\{c,d\}, \{(\{a,b\},\varnothing\})\})
\end{align*}
To make sense of the above \textit{div point set} representation, we label the third point from the bottom in \textit{Figure I} and the second point from the bottom in \textit{Figures II} and \textit{III} as $a$ (note that each of these points is surrounded by the remaining 3 points in the figure). For the rest of the points in each figure we shall label them arbitrarily as $b$, $c$, and $d$. Notice how in all figures for the 3 \textit{dividers} made up of $a$ and an arbitrary point, we have 2 \textit{divs} of 1 cardinality, and how for the remaining 3 \textit{dividers}, we have rest of the points in a single \textit{div}  - precisely that of what $D \in \Theta_A$ describes.

Only a handful of \textit{div point sets} can be used to represent points in general position in $\mathbb{E}^2$. For majority of $\mathscr{X} \in \mathscr{DPS}^*$, there exists no meaningful interpretation for $\pi_1(\mathscr{X})$ as some set of points in $\mathbb{E}^2$ such that $\pi_2(\mathscr{X})$ describe their relative positions. A classical example would be $(Q, \Theta_Q)$ where $Q$ is any set of 4 elements $a$, $b$, $c$, $d$ and
\begin{align*}
    \Theta_{Q} = & \{(\{a,b\},\{(\{c,d\},\varnothing\}), \\
    &\;\; (\{a,c\},\{(\{b,d\},\varnothing\}), \\
    &\;\; (\{a,d\},\{(\{b,c\},\varnothing\}), \\
    &\;\; (\{b,c\}, \{(\{a,d\},\varnothing\}), \\
    &\;\; (\{b,d\},\{(\{a,c\},\varnothing\}), \\
    &\;\; (\{c,d\}, \{(\{a,b\},\varnothing\})\})
\end{align*}

For any $\mathscr{X} \in \mathscr{DPS}^*$ to have a meaningful interpretation for $\pi_1(\mathscr{X})$ as some set of points in $\mathbb{E}^2$, it has to satisfy certain conditions. After some experimentation with points in $\mathbb{E}^2$, one would make the observation that the following formulae always hold for any distinct points $a$, $b$, $c$, $d$:

\begin{alignat}{2}
 \label{general_position_law1}
  &a\not=b\not=c\not=d \quad \Rightarrow & \quad & \begin{aligned}[t] \renewcommand\arraystretch{1.25}\begin{array}[t]{|@{\hskip0.6em}l} \color{black}
    a \in \langle b,c \rangle^{\scalebox{0.75}[1.0]{-}d} \Leftrightarrow d \in \langle b,c \rangle^{\scalebox{0.75}[1.0]{-}a} \\
  a \in \langle b,c \rangle^d \Leftrightarrow d \in \langle b,c \rangle^a
  \end{array}
  \end{aligned}\\[2.5ex]
  \label{general_position_law2}
  &a\not=b\not=c\not=d \quad \Rightarrow & \quad & \begin{aligned}[t] \renewcommand\arraystretch{1.25}\begin{array}[t]{|@{\hskip0.6em}l} \color{black}
  c \in \langle a,b \rangle^{\scalebox{0.75}[1.0]{-}d} \\
  \quad  \Leftrightarrow ((a \in \langle b,c \rangle^d \land a \in \langle b,d \rangle^c) \\
  \quad  \qquad \lor (a \in \langle b,c \rangle^{\scalebox{0.75}[1.0]{-}d} \land a \in \langle b,d \rangle^{\scalebox{0.75}[1.0]{-}c}) )
  \end{array}
  \end{aligned}\\[2.5ex]
    \label{general_position_law3}
  &a\not=b\not=c\not=d \quad \Rightarrow & \quad & \begin{aligned}[t] \renewcommand\arraystretch{1.25}\begin{array}[t]{|@{\hskip0.6em}l} \color{black}
  c \in \langle a,b \rangle^{d} \\
  \quad  \Leftrightarrow ((a \in \langle b,c \rangle^d \land a \in \langle b,d \rangle^{\scalebox{0.75}[1.0]{-}c}) \\
  \quad\qquad \lor (a \in \langle b,c \rangle^{\scalebox{0.75}[1.0]{-}d} \land a \in \langle b,d \rangle^{c}) )
  \end{array}
  \end{aligned}\\[2.5ex]
      \label{general_position_law4}
  &a\not=b\not=c\not=d \quad \Rightarrow & \quad & \begin{aligned}[t] \renewcommand\arraystretch{1.25}\begin{array}[t]{|@{\hskip0.6em}l} \color{black}
  a \in \langle b,c \rangle^{-d} \land a \in \langle b,d \rangle^{-c} \Rightarrow a \in \langle c,d \rangle^{b}
  \end{array}
  \end{aligned}\end{alignat}
wherein $\langle x,y \rangle^z$ denote the \textit{div} containing $z$ produced by the \textit{divider} made up of the point $x$ and $y$, and $\langle x,y \rangle^{\scalebox{0.75}[1.0]{-}z}$ denote the \textit{div} not containing $z$ produced by the \textit{divider} (here $x$, $y$, and $z$ are metavariable). \reff{general_position_law1} is trivially true. \reff{general_position_law2}, \reff{general_position_law3} and \reff{general_position_law4} are demonstrated in \textit{Figures IV}, \textit{V} and\textit{VI} respectively.

\begin{figure}[p]
\centering
\includegraphics[height=5cm]{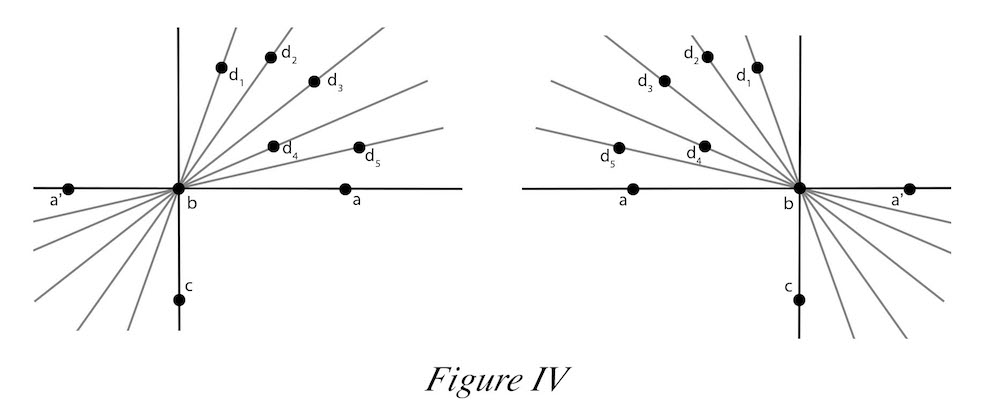}
\end{figure}
\begin{figure}[p]
\centering
 \includegraphics[height=5cm]{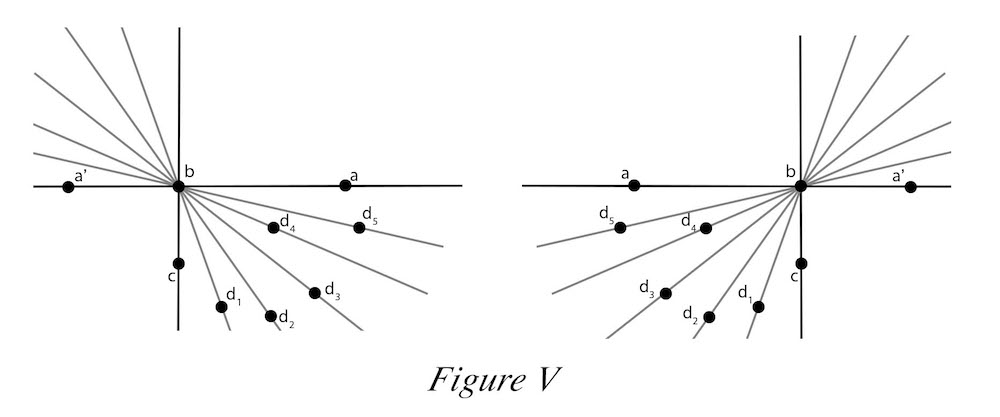}
\end{figure}
\begin{figure}[p]
\centering
\includegraphics[height=5cm]{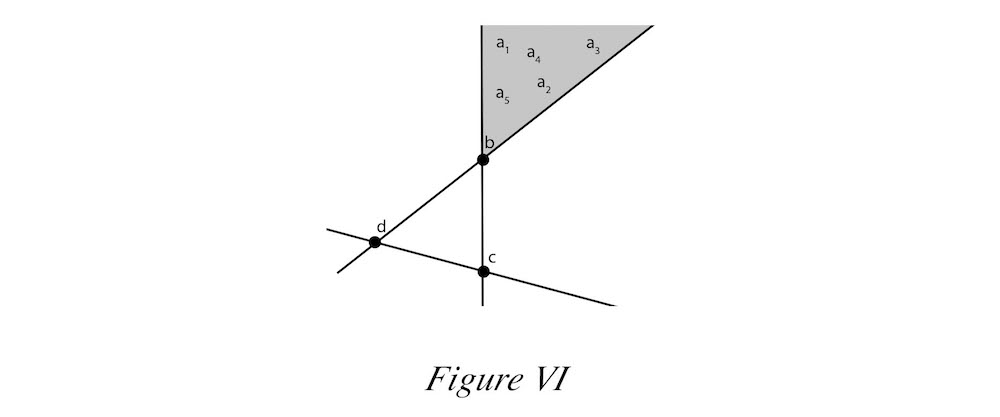}
\end{figure}

In the context of \textit{div point sets},   \reff{general_position_law1} is always true by \reff{def1_2} (recall $ \bigcap \delta = \varnothing$), while \reff{general_position_law2}, \reff{general_position_law3} and \reff{general_position_law4} can each be rewritten as constraints on the \textit{dividons} as shown in \reff{dividon_law1}, \reff{dividon_law2}, and \reff{dividon_law3}.

\begin{align}
\begin{split}
\label{dividon_law1}
&\forall R \in \{ S \in \mathcal{P}(P) : |S| =4 \} \\
&\begin{aligned}\qquad  &\forall D_1, D_2, D_3 \in \Theta_{P} \\
&\qquad \bigcup_{n=1}^{3} \pi_1(D_n) = R \land |\bigcap_{n=1}^{3} \pi_1(D_n)|= 1  \\
&\begin{aligned}\qquad \Rightarrow ( \; &\phi(\pi_2(D_1),R \setminus \pi_1(D_1)) = 0 \\
&\Leftrightarrow \phi(\pi_2(D_2),R  \setminus \pi_1(D_2)) = \phi(\pi_2(D_3),R  \setminus \pi_1(D_3)) \; ) \\
\end{aligned} \end{aligned} \end{split} \\[2.5ex]
\begin{split}\label{dividon_law2}
&\forall R \in \{ S \in \mathcal{P}(P) : |S| =4 \} \\
&\begin{aligned}\qquad  &\forall D_1, D_2, D_3 \in \Theta_{P} \\
&\qquad \bigcup_{n=1}^{3} \pi_1(D_n) = R \land |\bigcap_{n=1}^{3} \pi_1(D_n)|= 1 \\
&\begin{aligned}\qquad \Rightarrow ( \; &\phi(\pi_2(D_1),R \setminus \pi_1(D_1)) = 1 \\
&\Leftrightarrow \phi(\pi_2(D_2),R  \setminus \pi_1(D_2)) \not= \phi(\pi_2(D_3),R  \setminus \pi_1(D_3)) \; ) \\
\end{aligned} \end{aligned} \end{split} \\[2.5ex]
\begin{split}\label{dividon_law3}
&\forall R \in \{ S \in \mathcal{P}(P) : |S| =4 \} \\
&\begin{aligned}\qquad  &\forall D_1, D_2, D_3 \in \Theta_{P} \\
&\qquad \bigcup_{n=1}^{3} \pi_1(D_n) \subset R \land |\bigcup_{n=1}^{3} \pi_1(D_n)| = 3 \land  D_1\not=D_2\not=D_3\\
&\begin{aligned}\qquad \Rightarrow ( \; &\phi(\pi_2(D_1),R \setminus \pi_1(D_1)) = \phi(\pi_2(D_2),R  \setminus \pi_1(D_2)) = 0 \\
&\Rightarrow \phi(\pi_2(D_3),R  \setminus \pi_1(D_3))  = 1 \; ) \\
\end{aligned} \end{aligned} \end{split}
\end{align}
Here $\phi$ determines if two distinct \textit{TBD} points belong to the same \textit{div} i.e.
\begin{equation}\label{phi}
    \phi(\delta,TBD_2)=
\begin{cases}
   1 & \text{if } \;   \exists div \in \delta \; \; |TBD_2 \cap div| = 2 \\
    0 & \text{if }\;  \forall div \in \delta \; \; |TBD_2 \cap div| = 1
\end{cases}
\end{equation}
\end{prelude}
\begin{note}
In \reff{dividon_law1} and \reff{dividon_law2}, it is not necessary to write down $D_1\not=D_2\not=D_3$ explicitly as a part of the conjunction in the antecedent since $\bigcup_{n=1}^{3} \pi_1(D_n) = R \land |\bigcap_{n=1}^{3} \pi_1(D_n)|= 1$ ensures that $D_1$, $D_2$, and $D_3$ are distinct.
\end{note}

\begin{ax} Some $\mathscr{X} \in \mathscr{DPS}^*$ has an interpretation for $\pi_1(\mathscr{X})$ as some set of points in $\mathbb{E}^2$ such that $\pi_2(\mathscr{X})$ describes the relative positions of these points iff $\mathscr{X}$ is in $\mathscr{DPS}^+$, the class of \textit{div point sets} $(P,\Theta_P)$ satisfying \reff{dividon_law1}, \reff{dividon_law2}, and \reff{dividon_law3}.
\end{ax}
\begin{remark}
For \textit{div point sets} of 3 or less points, it is vacuously true that they satisfy \reff{dividon_law1}, \reff{dividon_law2}, and \reff{dividon_law3} and they are all thus in the class $\mathscr{DPS}^+$. This is consistent with Euclidean geometry: any set of 3 points in general position can be represented by any \textit{div point set} of 3 points, and the same goes to any set of 2 or less points.
\end{remark}

\begin{defn} We say that two \textit{div point sets} $(A,\Theta_A)$ and $(B, \Theta_B)$ are isomorphic iff there exists a bijection $f:A \stackrel{\rm{1:1}}{\longrightarrow} B$ preserving the structure of the \textit{dividons}, notationally,
\begin{alignat}{2}\label{isomorphism}
  &(A,\Theta_A) \cong (B,\Theta_B)  && \Leftrightarrow \begin{aligned}[t] \renewcommand\arraystretch{1.25}\begin{array}[t]{@{\hskip0em}l}
  \exists f:A \stackrel{\rm{1:1}}{\longrightarrow} B\\
 \quad\forall D_A \in \Theta_A \\
 \quad\quad\exists D_B \in \Theta_B \\
 \quad\quad\quad f^{members}(\pi_1(D_A)) = \pi_1(D_B)\\
 \quad\quad\quad \Leftrightarrow f^{members^{2}}(\pi_2(D_A)) = \pi_2(D_B) \\
  \end{array}
  \end{aligned}
\end{alignat}
\end{defn}

\begin{remark}It is trivially true that for any two distinct \textit{div point sets} of 3 or less points, they are isomorphic to each other if they are of the same number of points. \end{remark}

\begin{theo} $\neg ( \mathscr{X} \cong Conc_4^1 ) \Leftrightarrow ( \mathscr{X} \cong Conv_4 )$ for all $\mathscr{X} \in \mathscr{DPS}^+_4$ where $\mathscr{DPS}^+_4$ denotes the class of \textit{div point sets} of 4 points in $\mathscr{DPS}^+$ and
\begin{gather}
\begin{split} \label{c4}
    Conc_4^1 = &(Cc_4^1, \Theta_{Cc_4^1}) \\
    Cc_4^1 = &\{1,2,3,4\} \\
    \Theta_{Cc_4^1} = & \{(\{1,2\},\{\{3\},\{4\}\}), \\
    &\;\; (\{1,3\},\{\{2\},\{4\}\}), \\
    &\;\; (\{1,4\},\{\{2\},\{3\}\}), \\
    &\;\; (\{2,3\}, \{\{1,4\},\varnothing\}), \\
    &\;\; (\{2,4\},\{\{1,3\},\varnothing\}), \\
    &\;\; (\{3,4\}, \{\{1,2\},\varnothing\})\}
 \end{split}
\begin{split}
    Conv_4  = &(Cv_4, \Theta_{Cv_4}) \\
    Cv_4 = &\{1,2,3,4\} \\
    \Theta_{Cv_4} = & \{(\{1,2\},\{\{3,4\},\varnothing\}), \\
    &\;\; (\{1,3\},\{\{2\},\{4\}\}), \\
    &\;\; (\{1,4\},\{\{2,3\},\varnothing\}), \\
    &\;\; (\{2,3\}, \{\{1,4\},\varnothing\}), \\
    &\;\; (\{2,4\},\{\{1\},\{3\}\}), \\
    &\;\; (\{3,4\}, \{\{1,2\},\varnothing\})\}
 \end{split}
\end{gather}
\end{theo}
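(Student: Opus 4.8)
The plan is to collapse the whole statement into a finite combinatorial classification, by first observing that a div point set on four points is completely determined by a single bit attached to each of its six dividers. Fix $P=\{a,b,c,d\}$ with $|P|=4$. By \reff{def1_3} and \reff{power_set_car_2_as_divider} the map $D\mapsto\pi_1(D)$ is a bijection from $\Theta_P$ onto the six $2$-element subsets of $P$, and for each divider the set of \textit{TBD} points $P\setminus\pi_1(D)$ has exactly two elements; by \reff{def1_2} the divs $\pi_2(D)$ partition those two points either as two singletons or as one pair together with $\varnothing$. Hence each dividon is pinned down by its divider together with the value $p_{\{x,y\}}:=\phi(\pi_2(D),P\setminus\{x,y\})\in\{0,1\}$ of \reff{phi}, and I would record a four-point div point set as a labelling $p$ of the edges of the complete graph on $P$, calling an edge $\{x,y\}$ \emph{marked} when $p_{\{x,y\}}=1$ (its two opposite points share a div). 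Unwinding \reff{isomorphism}, and using that a bijection $f$ sends $\{\{c\},\{d\}\}$ to $\{\{f(c)\},\{f(d)\}\}$ and $\{\{c,d\},\varnothing\}$ to $\{\{f(c),f(d)\},\varnothing\}$, two such objects are isomorphic exactly when some permutation of $P$ carries one edge-labelling to the other.

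Next I would translate the three dividon laws into conditions on $p$, using that for four points the only admissible $R$ in \reff{dividon_law1}--\reff{dividon_law3} is $P$ itself. Three dividers whose union is all of $P$ and whose intersection is a single point $v$ are precisely the three edges at $v$; running \reff{dividon_law1} and \reff{dividon_law2} over all orderings of $D_1,D_2,D_3$ turns them into the single parity statement $p_{\{v,a\}}\oplus p_{\{v,b\}}\oplus p_{\{v,c\}}=0$ at each vertex $v$, i.e. the marked edges form an even subgraph. Three distinct dividers with union of size three are the three edges of a triangle, so \reff{dividon_law3} says no triangle of the graph is entirely unmarked. Thus, after the identification above, membership in $\mathscr{DPS}^+_4$ is equivalent to: the marked edges form an even subgraph of $K_4$ meeting every triangle.

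I would then enumerate directly. The even subgraphs of $K_4$ are exactly the eight elements of its cycle space (of dimension $6-4+1=3$): the empty graph, the four triangles, and the three four-cycles. The empty graph fails the triangle condition, while every triangle and every four-cycle meets all four triangles of $K_4$, so precisely seven labellings survive. The symmetric group $S_4$ acts transitively on the four triangles and transitively on the three four-cycles, so the survivors form two orbits. Matching $Conc_4^1$ from \reff{c4}, whose marked edges are exactly the triangle on the three non-central points, identifies the triangle orbit; matching $Conv_4$, whose marked edges form the boundary four-cycle with the two diagonals unmarked, identifies the four-cycle orbit. The two orbits are distinct because the number of marked edges is an isomorphism invariant, equal to $3$ in the first case and $4$ in the second.

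Finally, since every $\mathscr{X}\in\mathscr{DPS}^+_4$ falls into exactly one of these two isomorphism classes, being non-isomorphic to $Conc_4^1$ is equivalent to being isomorphic to $Conv_4$, which is the claim. I expect the main obstacle to be the faithful bookkeeping in the middle step: checking that the biconditionals in \reff{dividon_law1}--\reff{dividon_law3}, quantified over all orderings, collapse exactly to the parity-plus-triangle description, and that nothing is lost in passing through the somewhat awkwardly phrased biconditional in \reff{isomorphism}. Once that correspondence is secured, the enumeration and the orbit count are routine.
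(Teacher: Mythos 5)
Your proposal is correct, and its first half is essentially the paper's own reduction in different clothing: your edge-labelling $p$ of $K_4$ is exactly the paper's full vertex colouring $Col(\pi_2(\mathscr{X}))$ of the hypergraph $H$ (whose vertices are the six dividers and whose hyperedges are the triples of dividers through a common point, i.e.\ the vertex-stars of $K_4$), and your observation that \reff{dividon_law1}--\reff{dividon_law2} force each such triple to carry an even number of $1$'s is the paper's requirement that every hyperedge be coloured $[0,0,0]$ or $[0,1,1]$. Where you genuinely diverge is the finishing step. The paper propagates colours by hand from one hyperedge outward and arrives at three \emph{Scenarios} (all six dividers at $0$; three at $0$ and three at $1$; two at $0$ and four at $1$), then discards the first via \reff{dividon_law3} and asserts, somewhat informally, that the remaining two scenarios each constitute a single isomorphism class matching $Conc_4^1$ and $Conv_4$. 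You instead recognise the admissible labellings as the cycle space of $K_4$ (eight elements: the empty graph, four triangles, three $4$-cycles), kill the empty one with the triangle condition from \reff{dividon_law3}, and invoke transitivity of the $S_4$-action on triangles and on $4$-cycles to get exactly two orbits, separated by the invariant ``number of marked edges'' ($3$ versus $4$). This buys a cleaner and more complete argument precisely at the point where the paper is weakest: the paper never really verifies that all colourings within a scenario are related by a relabelling of the points (it appeals to an auxiliary notion of colouring isomorphism without proving orbit-transitivity), whereas your orbit count makes both the exhaustiveness and the mutual exclusivity of the two classes immediate. The one place to be careful, as you note yourself, is the translation of the biconditional in \reff{isomorphism} into ``some permutation of $P$ carries one labelling to the other''; your reading is the intended one, and under it the argument is complete.
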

\begin{proof}[Proof for \textit{Theorem 1}]
\begin{summary}In Part 1 of the proof we would define a function $\psi$ that returns 0 or 1 based on the \textit{divs} of a \textit{dividon} of some \textit{div point set} in $\mathscr{DPS}^+_4$. In Part 2 we would define a class $\mathscr{DPS}^\mathbb{N}_4$, a set of vertices for a hypergraph $H$, and a function $Col$ that uses $\psi$, and show that for every $\mathscr{X} \in \mathscr{DPS}^\mathbb{N}_4$, there exists a unique full vertex monochromatic coloring $Col(\pi_2(\mathscr{X}))$ on $H$. In Part 3 we would define a set of edges for $H$ in such a manner that the coloring $Col(\pi_2(\mathscr{X}))$ on $H$ satisfies some conditions iff $\mathscr{X}$ satisfies \reff{dividon_law1} and \reff{dividon_law2}. In Part 4 we would demonstrate that for the coloring to satisfy the conditions, there exists only 3 $\color{black}\mathcal{Scenarios} $. The colorings described in $\color{black}\mathcal{Scenarios}$ II and III are isomorphic$\color{black}^*$  to $Col(\pi_2(Conc_4^1))$ and $Col(\pi_2(Conv_4))$ respectively, and $Conc_4^1$ and $Conv_4$ both satisfy \reff{dividon_law3}, but the \textit{div point set} the coloring described in $\color{black}\mathcal{Scenario}\; I$ is based on does not satisfy \reff{dividon_law3}. Therefore we conclude that \textit{div point sets} of 4 points satisfying \reff{dividon_law1}, \reff{dividon_law2} and \reff{dividon_law3} are either isomorphic to $Conc_4^1$ or $Conv_4$, thus proving \textit{Theorem 1}.
\end{summary}
\begin{proofpart}
Since every \textit{dividon} of any \textit{div point set} in $ \mathscr{DPS}^+_4$ has $4-2=2$ \textit{TBD} points, we can be certain that, let $D$ be a \textit{dividon} and $a$ and $b$ be the \textit{TBD} points, $\pi_2(D) \in \{ type_0, type_1 \}$ where
\begin{gather}
\begin{split}\label{implies_phi_to_delta}
& type_0 \coloneqq \{\{a\},\{b\}\}\\
&type_1 \coloneqq \{\{a,b\},\varnothing\} \\
 \end{split}
\end{gather}
By exploiting the fact every $\pi_2(D)$ is either $type_0$ or $type_1$, for $\mathscr{X} \in \mathscr{DPS}^+_4$, we can define a new function $\psi$, a simpler version of $\phi$ (recall \reff{phi}) that does basically the same thing:
\begin{gather}\label{coloring}\begin{split}
    \psi(\delta)=
\begin{cases}
    0 & \text{if } \;  \forall \textit{div} \in \delta \quad |div| = 1 \\
     1 & \text{if } \; \exists \textit{div} \in \delta \quad |div| = 2
\end{cases}
\end{split}\end{gather}
For every $D \in \pi_2(\mathscr{X})$ where $\mathscr{X} \in \mathscr{DPS}^+_4$, we thus have
\begin{gather}\label{phi_to_delta}\begin{split}
\phi(\pi_2(D), P \setminus \pi_1(D) ) = \psi(\pi_2(D))
\end{split}\end{gather}
\end{proofpart}
\begin{proofpart}
Let $\mathscr{DPS}^\mathbb{N}_4$ be the class of all \textit{div point sets} $(P,\Theta_P)$ for which $P = \{1,2,3,4\}$. All $\mathscr{X} \in \mathscr{DPS}^\mathbb{N}_4$ would have the same set of \textit{dividers} (recall \reff{power_set_car_2_as_divider}). Now let $H=(V,E)$ be a hypergraph whose vertices are the \textit{dividers} of $\mathscr{X} \in \mathscr{DPS}^\mathbb{N}_4$. Using $\psi$, we can define a bijective function $Col$ that transforms the set of \textit{dividons} of any $\mathscr{X} \in \mathscr{DPS}^\mathbb{N}_4$ into some full vertex monochromatic coloring for H.
\begin{equation}
\begin{gathered}\label{coloring1}
Col : \{\pi_2(\mathscr{X}) : \mathscr{X} \in \mathscr{DPS}^\mathbb{N}_4\} \stackrel{\rm{1:1}}{\longrightarrow} FullCol(H,\{0,1\}) \\
Col(\Theta_P) = \{ (\pi_1(D),\psi(\pi_2(D))) : D \in \Theta_P \} \\
\end{gathered}
\end{equation}
It is bijective since $|FullCol(H,\{0,1\})| = |\mathscr{DPS}^\mathbb{N}_4|$ and
\begin{gather}\begin{split}
&\forall \mathscr{X_1}, \mathscr{X_2} \in \mathscr{DPS}^{\mathbb{N}}_4 \\
&\quad Col(\mathscr{\pi_2(X_1)}) =  Col(\mathscr{\pi_2(X_2)}) \Leftrightarrow \mathscr{X_1} = \mathscr{X_2} \\
\end{split}\end{gather}
due to the fact that for any 2 \textit{dividons}, $D_1$ and $D_2$, made up of the same divider, belonging to 2 \textit{div point set} in $\mathscr{DPS}^{\mathbb{N}}_4$ respectively, $\psi(\pi_2(D_1))=\psi(\pi_2(D_2))$  iff $D_1 = D_2$.
\end{proofpart}
\begin{proofpart}
Let any set of 3 \textit{dividers} having 1 point in common to be an edge of $H$ i.e.
\begin{gather}\begin{split}\label{edges}
E \coloneqq \{ e \in \mathcal{P}(V) :  |e| = 3 \land |\bigcap e| = 1 \}
\end{split}\end{gather}
for some $\mathscr{X} \in \mathscr{DPS}^\mathbb{N}_4$ to satisfy \reff{dividon_law1} and \reff{dividon_law2} is equivalent to having $Col(\pi_2(\mathscr{X})) \in FullCol(H,\{0,1\})$ to satisfy I and II:
\begin{enumerate}[I.]
\item For any vertex $v$ colored 0, the other 2 vertices belonging to the same edge as $v$ must be colored the same.
\item For any vertex $v$ colored 1, the other 2 vertices belonging to the same edge as $v$ must be colored differently.
\end{enumerate}
This is in virtue of fact that for any $\mathscr{X} \in \mathscr{DPS}^\mathbb{N}_4$, \reff{dividon_law1} and  \reff{dividon_law2} can be rewritten as having the coloring $C \coloneqq Col(\pi_2(\mathscr{X}))$ to satisfy some formulae, namely \reff{graph_law1} and \reff{graph_law2}.
\begin{align}
\begin{split} \label{graph_law1}
&\forall e \in E \\
&\qquad  \forall d_1, d_2, d_3 \in e \\
&\qquad \qquad  d_1 \not = d_2 \not = d_3 \Rightarrow (C(d_1) = 0 \Leftrightarrow C(d_2) = C(d_3))
\end{split}
\end{align}
\begin{align}
\begin{split}  \label{graph_law2}
&\forall e \in E \\
&\qquad  \forall d_1, d_2, d_3 \in e \\
&\qquad \qquad  d_1 \not = d_2 \not = d_3 \Rightarrow (C(d_1) = 1 \Leftrightarrow C(d_2) \not= C(d_3))
\end{split}
\end{align}
 The above rewriting works because
\begin{align}\begin{split}\label{phi_to_psi}
&\forall R \in \{ S \in \mathcal{P}(P) : |S| =4 \} \\
&\qquad  \forall D \in \Theta_{P} \\
&\qquad \qquad  \phi(\pi_2(D),R \setminus \pi_1(D)) = \phi(\pi_2(D),P \setminus \pi_1(D)) = \psi(\pi_2(D))
\end{split}\end{align}
holds for any \textit{div point set} $(P,\Theta_P)$ for which $|P|=4$, and any \textit{dividons} $D_1, D_2$ and $D_3$ satisfying $|\bigcap_{n=1}^{3} \pi_1(D_n)| = 1 \land  |\bigcup_{n=1}^{3} \pi_1(D_n)| = 4$ would respectively have three \textit{dividers} $d_1$, $d_2$ and $d_3 $ where
\begin{gather}
|\bigcap_{n=1}^{3} d_n| = 1 \land d_1 \not= d_2 \not= d_3
\end{gather}
which are precisely what make up an edge of $H$. Therefore some $\mathscr{X} \in \mathscr{DPS}^\mathbb{N}_4$ satisfies \reff{dividon_law1} and  \reff{dividon_law2} iff $Col(\mathscr{\pi_2(X)})$ satisfies \textit{I} and \textit{II}.
\end{proofpart}
\begin{proofpart}
To satisfy I and II, for every edge of $H$, the 3 vertices it contains must be colored either $[0,0,0]$ or $[0,1,1]$.

Suppose we start off by giving three arbitrary vertices belonging to the same edge the coloring of $[0,0,0]$, by I, the rest of the vertices have to be colored the same (recall that each vertex belongs to 2 different edges). We either end up with $H$ having all vertices colored 0 (let's call it $\color{black}\mathcal{Scenario}\; I$), or 3 vertices colored 0 and 3 vertices colored 1 (let's call it $\color{black}\mathcal{Scenario}\; II$).

Now suppose we start off by giving three arbitrary vertices belonging to the same edge the coloring of $[0,1,1]$. By I, the remaining 2 vertices of another edge the vertex colored 0 belongs to needs to be colored the same. If we color them both 0, the last uncolored vertex of $H$ must then be colored 1 as it belongs to edges wherein both the other 2 vertices are colored differently. We would end up in $\color{black}\mathcal{Scenario}\; II$ again. On the other hand, if we colored them both 1, the last uncolored vertex must then be colored 0 as it belongs to edges wherein both the other 2 vertices are colored the same. Let's call this $\color{black}\mathcal{Scenario}\; III$, where 2 vertices are colored 0 and 4 vertices are colored 1.

A pictorial description of the colorings is shown in Figure VII.

\begin{figure}[!h]
\centering
\includegraphics[height=10cm]{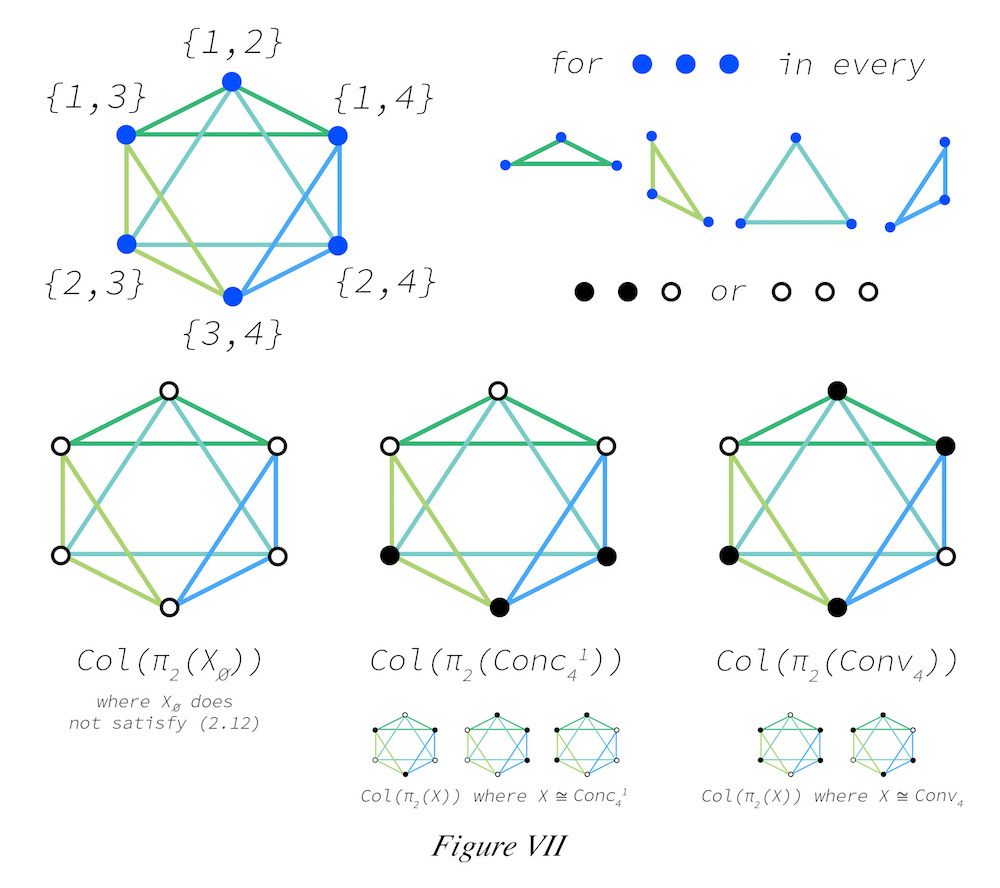}
\end{figure}

$\color{black} \mathcal{Scenario}$ I describes a coloring isomorphic$\color{black}^*$ to $Col(\pi_2(Conc_4^1))$ to $Col(\pi_2(\mathscr{X}_{\varnothing}))$ where $\mathscr{X}_{\varnothing} \in \mathscr{DPS}^{\mathbb{N}}_4$ and
\begin{align*}
    \pi_2(\mathscr{X}_{\varnothing}) = & \{(\{1,2\},\{(\{3,4\},\varnothing\}), \\
    &\;\; (\{1,3\},\{(\{2,4\},\varnothing\}), \\
    &\;\; (\{1,4\},\{(\{2,3\},\varnothing\}), \\
    &\;\; (\{2,3\}, \{(\{1,4\},\varnothing\}), \\
    &\;\; (\{2,4\},\{(\{1,3\},\varnothing\}), \\
    &\;\; (\{3,4\}, \{(\{1,2\},\varnothing\})\})
\end{align*}
while $\color{black}\mathcal{Scenario}$ II describes a coloring isomorphic$\color{black}^*$ to $Col(\pi_2(Conc_4^1))$ and $\color{black} \mathcal{Scenario}$ III descrbies a coloring isomorphic$\color{black}^*$ to $Col(\pi_2(Conv_4))$. $Conc_4^1$ and $Conv_4$ both satisfy \reff{dividon_law3}, and $\mathscr{X}_{\varnothing}$ does not. Since every $\mathscr{X} \in  \mathscr{DPS^\mathbb{*}_4}$ is isomorphic to some $\mathscr{X} \in \mathscr{DPS^\mathbb{N}_4}$, and in $\mathscr{DPS^\mathbb{N}_4}$ only $Conc_4^1$ and $Conv_4$ satisfy all \reff{dividon_law1}, \reff{dividon_law2}, and \reff{dividon_law3}, we conclude that
\begin{gather}\begin{split}
\forall X \in \mathscr{DPS}^{+}_4 \quad \exists a \in \{Conc_4^1,Conv_4 \} \quad X \cong a
\end{split}
\end{gather}
\end{proofpart}

\end{proof}
\begin{note}
\textit{isomorphic$\color{black}^*$}: the isomorphism we are talking about here is that of colorings, which can be defined as follows:
\begin{alignat}{2}
  &C_1 \cong C_2 && \Leftrightarrow \begin{aligned}[t] \renewcommand\arraystretch{1.25}\begin{array}[t]{@{\hskip0em}l}
  \exists f_{C}: \{ \pi_1(c) : c \in C_1\} \stackrel{\rm{1:1}}{\longrightarrow} \{ \pi_1(c) : c \in C_2\}\\
 \qquad\forall c_1 \in C_1 \\
 \qquad\qquad\exists c_2 \in C_2 \\
 \qquad\qquad\qquad f_{C}(\pi_1(c_1)) = \pi_1(c_2) \Rightarrow \pi_2(c_1) = \pi_2(c_2) \\
  \end{array}
  \end{aligned}
\end{alignat}
\end{note}

\begin{remark}
In Euclidean geometry, \textit{Theorem 1} is equivalent as stating that for any set of 4 distinct points in general position, it is either the case that a point can be found inside a triangle formed by connecting the remaining 3 points, or the case that a convex quadrilateral can be created by connecting all 4 points, which can be verified rather easily by a human child with a pen, a piece of paper and a love for geometry.
\end{remark}

\subsection{ \textit{unit div point set} and \textit{sub div point set}}
For \textit{div point sets} of 5 or more points, the function $\psi$  would not be really useful since there would be 3 or more \textit{TBD points} in each \textit{dividon}. That means we cannot use the same approach as above to derive \textit{div point sets} of 5 or more points satisfying \reff{dividon_law1}, \reff{dividon_law2} and \reff{dividon_law3}. With that in mind, we introduce the object \textit{unit div point set}.

\begin{defn}
A \textit{unit div point set} is any ordered pair $(P,\Omega_P)$ satisfying \reff{def3_1}, \reff{def3_2} and \reff{def3_3}.

\begin{alignat}{2}\label{def3_1}
  &\mathrlap{\lvert\Omega_P\rvert = \binom{\lvert P\rvert}{2}  \binom{|P|-2}{2} \land P \not= \varnothing}\\[1.5ex] \label{def3_2}
    & \forall D_n \in \Omega_P & \quad & \begin{aligned}[t] \renewcommand\arraystretch{1.25}\begin{array}[t]{|@{\hskip0.6em}l}
    D_n \text{ is an ordered pair. } \\
  d_n \coloneqq \pi_1(D_n) \\
  \delta_n \coloneqq \pi_2(D_n) \\
    \lvert d_n\rvert = \lvert\delta_n\rvert = \lvert\bigcup \delta_n\rvert =2\\
  d_n \in \mathcal{P}(P)\\
  \bigcup \delta_n \in \mathcal{P}(P \setminus d_n) \\
  \bigcap \delta_n = \varnothing
  \end{array}
  \end{aligned}\\[1.5ex] \label{def3_3}
    & \forall D_n, D_m \in \Omega_P & \quad & \begin{aligned}[t] \renewcommand\arraystretch{1.25}\begin{array}[t]{|@{\hskip0.6em}l}
   \xi(D_n) = \xi(D_m)  \Leftrightarrow D_n=D_m
  \end{array}
  \end{aligned}
\end{alignat}
where
\begin{align}
\xi(D) = \pi_1(D) \cup \bigcup \pi_2(D)
\end{align}
We would be using $\mathscr{UDPS}^*$ to denote the class of all \textit{unit div point set}. Each $D \in \Omega_P$ would be referred to as a \textit{unit dividon}.
\end{defn}
\begin{remark}
Similar to how every \textit{dividon} of $\mathscr{X} \in \mathscr{DPS^*}$ is either $type_0$ or $type_1$ as illustrated in \reff{implies_phi_to_delta}, every \textit{unit dividon} $D$ of any \textit{unit div point set} of points $P$ always satisfies
\begin{gather}
\begin{split}  \label{implies_phi_to_delta2}
& \exists a,b \in P \setminus \pi_1(D) \\
&  \qquad type_0 \coloneqq \{\{a\},\{b\}\}\\
& \qquad type_1 \coloneqq \{\{a,b\},\varnothing\} \\
& \qquad \pi_2(D) \in \{ type_0, type_1 \} \\
 \end{split}
\end{gather}
\end{remark}
\begin{prelude}[Defintion 4]
One may immediately notice that any \textit{div point sets} of 4 points also satisfy \reff{def3_1}, \reff{def3_2} and \reff{def3_3}, similar to how any \textit{unit div point set} of 4 points also satisfy \reff{def1_1}, \reff{def1_2} and \reff{def1_3}, which is to say,
\begin{align}
\{\mathscr{X_{udps}} \in \mathscr{UDPS}^* : |\pi_1(\mathscr{X_{udps}})| = 4 \} = \{\mathscr{X_{dps}} \in \mathscr{DPS}^* : |\pi_1(\mathscr{X_{dps}})| = 4\}
\end{align}
by virtue of the fact that $\binom{4}{2}  \binom{4-2}{2} = \binom{4}{2}$ and
\begin{alignat}{2}
    & \forall \mathscr{X} \in \mathscr{UDPS}^* \notag \\
    &\qquad |\pi_1(\mathscr{X})| = 4 \quad \Rightarrow & \quad & \begin{aligned}[t] \renewcommand\arraystretch{1.25}\begin{array}[t]{|@{\hskip0.6em}l}
  \forall D_n \in \pi_2(\mathscr{X}) \\
   \qquad \bigcup \pi_2(D_n) = P \setminus \pi_1(D_n)  \\
     \forall D_n,D_m \in \pi_2(\mathscr{X}) \\
    \qquad  \pi_1(D_n)= \pi_1(D_m) \Leftrightarrow D_n=D_m
  \end{array}
  \end{aligned}
\end{alignat}
As we can see, the difference between a \textit{div point set} and a \textit{unit div point set} lies in that the former relies on a single \textit{dividon} to describe the distribution of the $|P|-2$ \textit{TBD points} between 2 \textit{divs} for each \textit{divider}, while the later relies on $\binom{|P|-2}{2}$ \textit{unit dividons} for that (since each \textit{unit dividon} only describes the distribution of 2 \textit{TBD points}). For every $\mathscr{X_{dps}} \in \mathscr{DPS}^*$ there exists a unique $\mathscr{X_{udps}} \in \mathscr{UDPS}^*$ which $\mathscr{X_{dps}}$ can be transformed into, by breaking down each \textit{dividon} into $\binom{|P|-2}{2}$ \textit{unit dividons} containing the same \textit{divider}, achievable using the function $\mathscr{bd}$ defined as follows
 \begin{align}
 \begin{split}\label{bd_d_u}
&\mathscr{bd}(D,P) = \{(\pi_1(D),\mathscr{d_u}(\pi_2(D),P_{TBD}) : P_{TBD} \in  \mathcal{P}(P \setminus \pi_1(D)) : |P_{TBD}| = 2\}
 \\
&\mathscr{d_u}(\delta,TBD_2) =
\begin{cases}
   \{P,\varnothing\}& \text{if }\; \phi(\delta,TBD_2) = 1 \\
    \{ x \in \mathcal{P}(P) : |x| = 1  \}& \text{if }\; \phi(\delta,TBD_2) = 0
    \end{cases}
\end{split}
 \end{align}
 $\mathscr{bd}$ takes in a \textit{dividon} and a set of points, and returns a set of \textit{unit dividons}. It makes use of $\mathscr{d_u}$ that takes in a set of \textit{divs} from a \textit{dividon} and a set of 2 points, and returns a set of \textit{divs} for a \textit{unit dividon}.

\end{prelude}
\begin{defn} The function $\undervec{\mathscr{F}}_{\mathscr{udps}}^{\mathscr{DPS}}$ transforms a \textit{div point set} into a \textit{unit div point set}.
\begin{align}\begin{split}\label{def4}
    \undervec{\mathscr{F}}_{\mathscr{udps}}^{\mathscr{DPS}}(\mathscr{X_{dps}}) = (\pi_1(\mathscr{X_{dps}}), \bigcup \{\mathscr{bd}(D,\pi_1(\mathscr{X_{dps}})) : D \in \pi_2(\mathscr{X_{dps}}) \})
\end{split}\end{align}
\\
$\undervec{\mathscr{F}}_{\mathscr{udps}}^{\mathscr{DPS}}$ can be implemented in Haskell as follows:
\begin{lstlisting}
import Control.Monad
import Data.List ((\\))
powerList = filterM (const [True, False])

f:: ([Int],[([Int],[[Int]])]) -> ([Int],[([Int],[[Int]])])
f (points,dividons) = (points,unit_dividons)
    where
        unit_dividons =  foldl (++) [] $ map get_unit_dividons dividons
        get_unit_dividons (d,(delta1:_)) = [(d,(\(a:b:_)->
            if a `in_same_div_as_b` b
                then [[a,b],[]] else [[a],[b]])
            x ) |
            x <- powerList (points \\ d), length x == 2,
            let (in_same_div_as_b) a b = (a `elem` delta1) == (b `elem` delta1)]
\end{lstlisting}

\end{defn}
\begin{remark}
It is no surprise that
\begin{align}
\forall \mathscr{X} \in \mathscr{DPS}^* \qquad \undervec{\mathscr{F}}_{\mathscr{udps}}^{\mathscr{DPS}}(\mathscr{X}) = \mathscr{X} \Leftrightarrow \lvert \pi_1(\mathscr{X}) \rvert = 4
\end{align}
since for each $D\in \mathscr{X}_{4}$ where $\mathscr{X}_{4} \in \mathscr{DPS}_4^*$,  $\mathscr{bd}(D,\pi_1(\mathscr{X_{4}}))$ is a singleton and the one element it contains is $D$.\end{remark}
\begin{remark}
On the other hand,
\begin{align}
\forall \mathscr{X} \in \mathscr{DPS}^* \qquad \undervec{\mathscr{F}}_{\mathscr{udps}}^{\mathscr{DPS}}(\mathscr{X}) = (\pi_1(\mathscr{X}),\varnothing) \Leftrightarrow 1 \leq \lvert \pi_1(\mathscr{X}) \rvert \leq 3
\end{align}
and that is not going to be useful. So it is more sensible to define $\undervec{\mathscr{F}}_{\mathscr{udps}}^{\mathscr{DPS}}$ over \textit{div point sets} of 4 or more points i.e. $\undervec{\mathscr{F}}_{\mathscr{udps}}^{\mathscr{DPS}} : \mathscr{DPS}^*_{\geq4} \longrightarrow \mathscr{UDPS}^*$.
\end{remark}
\begin{lem}
$\undervec{\mathscr{F}}_{\mathscr{udps}}^{\mathscr{DPS}} : \mathscr{DPS}^*_{\geq4} \longrightarrow \mathscr{UDPS}^*$ is injective but not surjective.
\end{lem}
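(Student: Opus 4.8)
The map $\undervec{\mathscr{F}}_{\mathscr{udps}}^{\mathscr{DPS}}$ leaves the first coordinate $P$ untouched and replaces each \textit{dividon} $D$ by the bundle $\mathscr{bd}(D,P)$ of $\binom{|P|-2}{2}$ \textit{unit dividons} sharing the \textit{divider} $\pi_1(D)$. My plan for injectivity is to exhibit an explicit left inverse, i.e.\ to recover $\mathscr{X}$ from $\undervec{\mathscr{F}}_{\mathscr{udps}}^{\mathscr{DPS}}(\mathscr{X})$. The first coordinate returns $P$ for free. By \reff{power_set_car_2_as_divider} and \reff{def1_3}, a \textit{div point set} is completely determined by $P$ together with, for each \textit{divider} $d$, the unordered two-block partition $\pi_2(D_d)=\{div_1,div_2\}$ of $P\setminus d$ carried by its unique \textit{dividon} $D_d$. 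Now, by construction of $\mathscr{bd}$, the \textit{unit dividons} of the image whose \textit{divider} is $d$ record exactly the values $\phi(\pi_2(D_d),\{x,y\})$ for every pair $\{x,y\}\subseteq P\setminus d$; by \reff{phi} this value is $1$ precisely when $x,y$ lie in a common \textit{div}. So the image stores, for each \textit{divider}, the full ``same-\textit{div}'' relation on its \textit{TBD points}.

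The crux of injectivity is that this relation reconstructs the partition. Write $x\sim y$ iff $\phi(\pi_2(D_d),\{x,y\})=1$. Because \reff{def1_2} forces $\bigcup\pi_2(D_d)=P\setminus d$ and $\bigcap\pi_2(D_d)=\varnothing$, the two \textit{divs} are disjoint and cover $P\setminus d$, so $\sim$ is an equivalence relation with at most two classes, and those classes are exactly the nonempty \textit{divs}. Hence the unordered partition $\pi_2(D_d)$ is uniquely read off from $\sim$ (the degenerate all-same-\textit{div} case $\{P\setminus d,\varnothing\}$ included, and detectable since $|P|\ge 4$ guarantees $|P\setminus d|\ge 2$, so at least one pair exists). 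Consequently distinct \textit{dividons} on the same \textit{divider} induce distinct $\sim$, hence distinct \textit{unit dividon} bundles; as the \textit{dividers} partition the output data, $\undervec{\mathscr{F}}_{\mathscr{udps}}^{\mathscr{DPS}}(\mathscr{X}_1)=\undervec{\mathscr{F}}_{\mathscr{udps}}^{\mathscr{DPS}}(\mathscr{X}_2)$ forces $\mathscr{X}_1=\mathscr{X}_2$.

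For non-surjectivity the idea is that the image is constrained by a transitivity that a general \textit{unit div point set} need not respect: any image point has, for every \textit{divider}, a ``same-\textit{div}'' relation coming from an honest two-block partition, hence transitive. So I will build a valid $\mathscr{Y}\in\mathscr{UDPS}^*$ violating transitivity on some \textit{divider}. Take $P=\{1,2,3,4,5\}$ and the \textit{divider} $d=\{4,5\}$ with \textit{TBD points} $\{1,2,3\}$; choose the three \textit{unit dividons} on $d$ to be of types $type_1,type_1,type_0$ on the pairs $\{1,2\},\{2,3\},\{1,3\}$ respectively (so $1\sim 2$, $2\sim 3$, but $1\not\sim 3$), and assign every remaining (\textit{divider}, \textit{TBD}-pair) its own \textit{unit dividon} of type $type_0$. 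Since exactly one \textit{unit dividon} of valid $type_0/type_1$ shape (recall \reff{implies_phi_to_delta2}) is supplied per configuration, $\mathscr{Y}$ meets \reff{def3_1}, \reff{def3_2} and \reff{def3_3}; yet no two-block partition of $\{1,2,3\}$ induces that $\sim$, so $\mathscr{Y}$ is not in the image. A counting variant also works: on a fixed $n$-set with $n\ge 5$ there are $2^{\binom{n}{2}\binom{n-2}{2}}$ \textit{unit div point sets} against only $2^{(n-3)\binom{n}{2}}$ \textit{div point sets}, and $\binom{n-2}{2}>n-3$ makes the former strictly larger, so the injection cannot be onto.

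I expect the only real content --- and thus the step to write carefully --- to be the reconstruction claim of the second paragraph (that the same-\textit{div} data determines the two-block partition), together with verifying that the explicit $\mathscr{Y}$ genuinely satisfies \reff{def3_1}, \reff{def3_2} and \reff{def3_3}; the remaining bookkeeping is routine.
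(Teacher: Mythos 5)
Your proposal is correct and follows essentially the same route as the paper: injectivity because the bundle $\mathscr{bd}(D,P)$ of \textit{unit dividons} determines the \textit{dividon} (you make this explicit by reconstructing the two-block partition from the same-\textit{div} relation), and non-surjectivity via a \textit{unit div point set} whose same-\textit{div} data on one \textit{divider} violates transitivity, which is exactly the paper's case II (the paper also notes a second obstruction, case I, where all three pairs of \textit{TBD} points lie in distinct \textit{divs}, forcing more than two \textit{divs}). Your write-up is in fact more careful than the paper's --- the paper asserts injectivity in one line and does not verify that its non-image examples extend to full members of $\mathscr{UDPS}^*$ --- and your counting argument is a valid independent confirmation of non-surjectivity for $n\geq 5$.
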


\begin{proof}[Proof for Lemma 1]
It is injective because for every \textit{dividon} $D$ of any \textit{div point set} of 4 or more points, $\mathscr{bd}(D,\pi_1(\mathscr{X_{dps}}))$ in  \reff{def4} differs depending on $D$. By $I$ and $II$ below, we can see that it is not surjective onto the co-domain $\mathscr{UDPS}^*$.
\begin{enumerate}[I.]
\item There exists $\mathscr{X_{udps}} \in \mathscr{UDPS}^*$, where $\undervec{\mathscr{F}}_{\mathscr{udps}}^{\mathscr{DPS}}(\mathscr{W})=\mathscr{X_{udps}}$ iff $\mathscr{W}$ is an ordered pair satisfying all the conditions to be a \textit{div point set} except that,  some \textit{dividon} has more than 2 \textit{divs}, and, as a result, such $\mathscr{W} \not\in \mathscr{DPS}^*$ (Recall $|\delta_n| = 2$ in \reff{def1_2}). E.g. \textit{unit div point sets} with \textit{unit dividons} such as
\begin{align*}\begin{split}
\{(a,b),(\{c\},\{d\})\},\{(a,b),(\{c\},\{e\})\},\{(a,b),(\{e\},\{d\})\}
\end{split}\end{align*}
can only be transformed from a \textit{div-point-set}-like object where $|\pi_2(D)| =3 $ for some \textit{dividion} $D$, in this case: $\{(a,b),(\{c\},\{d\},\{e\})\}$. That is to say, for any $\mathscr{X_{udps}}\prime \in \mathscr{UDPS}^*$, where $\mathscr{X_{udps}}\prime = \undervec{\mathscr{F}}_{\mathscr{udps}}^{\mathscr{DPS}}(\mathscr{X_{dps}})$ for some $\mathscr{X_{dps}} \in \mathscr{DPS}^*$,  $\mathscr{X_{udps}}\prime$ satisfies
\begin{align}\begin{split}\label{2divs}
&\forall D_1,D_2,D_3 \in \pi_2(\mathscr{X_{udps}}\prime) \\
&\qquad D_1 \not = D_2 \not = D_3 \land \pi_1(D_1) = \pi_1(D_2) = \pi_1(D_3) \land |\bigcup_{n=1}^3 \bigcup \pi_2(D_n)| = 3\\
& \qquad \Rightarrow \neg (( \psi(\pi_2(D_1))  = \psi(\pi_2(D_2)) = \psi(\pi_2(D_3)) = 0 )  \\
 \end{split}
\end{align}
\item As a consequence of $\bigcap \delta_n = \varnothing$ in \reff{def1_2}, for any distinct \textit{TBD} points $c$, $d$, and $e$, of some \textit{divider} of a \textit{div point set}, if $c$ and $d$ are in the same \textit{div}, and $d$ and $e$ are in the same \textit{div}, it is certainly the case for $c$ and $e$ to be found in the same \textit{div}. So \textit{unit div point sets} with \textit{unit dividons} such as
\begin{gather*}\begin{split}
\{(a,b),(\{c,d\},\varnothing)\},\{(a,b),(\{c,e\},\varnothing)\},\{(a,b),(\{e\},\{d\})\}
\end{split}\end{gather*}
can not be transformed from any \textit{div point set}. That is to say, for any $\mathscr{X_{udps}}\prime \in \mathscr{UDPS}^*$, where $\mathscr{X_{udps}}\prime = \undervec{\mathscr{F}}_{\mathscr{udps}}^{\mathscr{DPS}}(\mathscr{X_{dps}})$ for some $\mathscr{X_{dps}} \in \mathscr{DPS}^*$,  $\mathscr{X_{udps}}\prime$ satisfies
\begin{align}\begin{split}\label{associativity}
&\forall D_1,D_2,D_3 \in \pi_2(\mathscr{X_{udps}}\prime) \\
&\qquad D_1 \not = D_2 \not = D_3 \land \pi_1(D_1) = \pi_1(D_2) = \pi_1(D_3) \land |\bigcup_{n=1}^3 \bigcup \pi_2(D_n)| = 3\\
& \qquad \Rightarrow \neg ( \psi(\pi_2(D_1))  = \psi(\pi_2(D_2)) = 1 \land  \psi(\pi_2(D_3)) = 0)
 \end{split}
\end{align}
\end{enumerate}

\end{proof}
\begin{remark}
Combining \reff{associativity} and \reff{2divs} above gives \reff{unit_dividon_law0}.
\begin{align}\begin{split}\label{unit_dividon_law0}
&\forall D_1,D_2,D_3 \in \Omega_P \\
&\qquad (D_1 \not = D_2 \not = D_3 \land \pi_1(D_1) = \pi_1(D_2) = \pi_1(D_3) \land |\bigcup_{n=1}^3 \bigcup \pi_2(D_n)| = 3)\\
&\begin{aligned}\qquad  \Rightarrow (&\psi(\pi_2(D_1)) = 1 \Leftrightarrow \psi(\pi_2(D_2)) = \psi(\pi_2(D_3)) ) \\
&\land ( \psi(\pi_2(D_1)) = 0 \Leftrightarrow \psi(\pi_2(D_2)) \not= \psi(\pi_2(D_3)) )
\end{aligned}   \end{split}\end{align}
Let's define $\mathscr{UDPS}^\Theta$ to be a subclass of $\mathscr{UDPS}^*$ for which $\undervec{\mathscr{F}}_{\mathscr{udps}}^{\mathscr{DPS}} : \mathscr{DPS}^*_{\geq4} \longrightarrow \mathscr{UDPS}^{\Theta}$ is bijective. We can be certain that $\mathscr{UDPS}^\Theta \subseteq \mathscr{UDPS}^{\Theta\prime}$, where $\mathscr{UDPS}^{\Theta\prime}$ is the class of \textit{unit div point sets} of 4 or more points satisfying \reff{unit_dividon_law0}. It is likely the case that \reff{unit_dividon_law0} is all that a \textit{unit div point set} must satisfy to be in the class $\mathscr{UDPS}^\Theta$ (i.e. $\mathscr{UDPS}^{\Theta\prime} = \mathscr{UDPS}^{\Theta}$), but that is not important in the current discussion and we would not be going into that.
\end{remark}
\begin{lem}
A \textit{unit div point set} $(P,\Omega_P)$ has an interpretation for $P$ as some set of 4 or more points in $\mathbb{E}^2$ such that $\Omega_P$ describes the relative positions of the points iff it is in $\mathscr{UDPS}^+$ wherein each \textit{unit div point set} satisfies \reff{unit_dividon_law1}, \reff{unit_dividon_law2}, and \reff{unit_dividon_law3}.
\begin{align}
\begin{split}
\label{unit_dividon_law1}
&\forall R \in \{ S \in \mathcal{P}(P) : |S| =4 \} \\
&\begin{aligned}\qquad &\forall D_1, D_2, D_3 \in \Omega_P \\
&\qquad (\; \xi(D_1) =\xi(D_2) =\xi(D_3) = R \land D_1 \not= D_2 \not= D_3 \\
&\qquad \land  |\bigcap_{n=1}^{3} \pi_1(D_n)|= 1 \; )\\
&\begin{aligned}\qquad \Rightarrow ( \; &\psi(\pi_2(D_1)) = 0 \\
&\Leftrightarrow \psi(\pi_2(D_2)) = \psi(\pi_2(D_3)) \; ) \\
\end{aligned} \end{aligned} \end{split}
\end{align}
\begin{align}\begin{split}
\label{unit_dividon_law2}
&\forall R \in \{ S \in \mathcal{P}(P) : |S| =4 \} \\
&\begin{aligned}\qquad  &\forall D_1, D_2, D_3 \in \Omega_P  \\
&\qquad (\; \xi(D_1) =\xi(D_2) =\xi(D_3) = R \land D_1 \not= D_2 \not= D_3 \\
&\qquad  \land  |\bigcap_{n=1}^{3} \pi_1(D_n)|= 1 \;) \\
&\begin{aligned}\qquad \Rightarrow ( \; &\psi(\pi_2(D_1))  = 1 \\
&\Leftrightarrow \psi(\pi_2(D_2))  \not= \psi(\pi_2(D_3))  \; ) \\
\end{aligned} \end{aligned} \end{split}
\end{align}
\begin{align}
\begin{split}\label{unit_dividon_law3}
&\forall R \in \{ S \in \mathcal{P}(P) : |S| =4 \} \\
&\begin{aligned}\qquad &\forall D_1, D_2, D_3 \in \Omega_P \\
&\qquad (\; \xi(D_1) =\xi(D_2) =\xi(D_3) = R \land D_1 \not= D_2 \not= D_3 \\
&\qquad \land |\bigcup_{n=1}^{3} \pi_1(D_n)|= 3 \; )  \\
&\begin{aligned}\qquad \Rightarrow ( \; &\psi(\pi_2(D_1))  = \psi(\pi_2(D_2))  = 0 \\
&\Rightarrow \psi(\pi_2(D_3))   = 1 \; )
\end{aligned} \end{aligned} \end{split}
\end{align}
\end{lem}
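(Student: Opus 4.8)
The plan is to transport Axiom 1 across the bijection $\undervec{\mathscr{F}}_{\mathscr{udps}}^{\mathscr{DPS}} : \mathscr{DPS}^*_{\geq4} \longrightarrow \mathscr{UDPS}^\Theta$ supplied by \reff{def4} and the Remark following \textit{Lemma 1}. A single configuration of points in $\mathbb{E}^2$ determines simultaneously a \textit{div point set} $(P,\Theta_P)$ and a \textit{unit div point set} $(P,\Omega_P)$, and by the construction of $\mathscr{bd}$ in \reff{bd_d_u} these are related by $\undervec{\mathscr{F}}_{\mathscr{udps}}^{\mathscr{DPS}}((P,\Theta_P)) = (P,\Omega_P)$. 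Hence $(P,\Omega_P)$ admits a geometric interpretation iff it is the image of some $(P,\Theta_P)$ that admits one, i.e.\ iff $(P,\Omega_P) = \undervec{\mathscr{F}}_{\mathscr{udps}}^{\mathscr{DPS}}((P,\Theta_P))$ for some $(P,\Theta_P) \in \mathscr{DPS}^+$. By Axiom 1 the latter is equivalent to $(P,\Theta_P)$ satisfying \reff{dividon_law1}, \reff{dividon_law2} and \reff{dividon_law3}, so everything reduces to translating those three constraints into the unit setting.

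The translation rests on a single pointwise identity. For $\mathscr{X_{dps}} \in \mathscr{DPS}^*_{\geq4}$ with image $\mathscr{X_{udps}}$, for each $4$-cardinality $R \subseteq P$ and each dividon $D$ whose divider lies in $R$, the unit dividon $D^u$ with $\pi_1(D^u) = \pi_1(D)$ and $\xi(D^u) = R$ satisfies $\psi(\pi_2(D^u)) = \phi(\pi_2(D), R \setminus \pi_1(D))$; this is immediate from $\mathscr{d_u}$ in \reff{bd_d_u}, which records a $type_1$ div exactly when $\phi = 1$, together with the definition of $\psi$ in \reff{coloring} and the $|P|=4$ identity \reff{phi_to_psi}. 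First I would check that the antecedents match: three dividers with $\bigcup_{n} \pi_1(D_n) = R$ and $\lvert\bigcap_n \pi_1(D_n)\rvert = 1$ are precisely three unit dividons with $\xi(D_n) = R$ and $\lvert\bigcap_n \pi_1(D_n)\rvert = 1$ (the shared-point configuration of \reff{unit_dividon_law1} and \reff{unit_dividon_law2}), while $\lvert\bigcup_n \pi_1(D_n)\rvert = 3$ gives the triangle configuration of \reff{unit_dividon_law3}. With the antecedents aligned, the pointwise identity turns each $\phi$-consequent into the matching $\psi$-consequent term by term, yielding \reff{dividon_law1} $\Leftrightarrow$ \reff{unit_dividon_law1}, \reff{dividon_law2} $\Leftrightarrow$ \reff{unit_dividon_law2} and \reff{dividon_law3} $\Leftrightarrow$ \reff{unit_dividon_law3}. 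Chaining this with the first paragraph settles the equivalence for every $\mathscr{X_{udps}}$ lying in the image $\mathscr{UDPS}^\Theta$.

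The forward direction (geometric interpretation $\Rightarrow$ the three unit laws) is then complete, since a realizable \textit{unit div point set} is automatically in $\mathscr{UDPS}^\Theta$. The main obstacle is the converse when $(P,\Omega_P)$ is \emph{not} known in advance to be in $\mathscr{UDPS}^\Theta$: to run the reduction I must first produce its \textit{div point set} preimage, which requires $(P,\Omega_P) \in \mathscr{UDPS}^\Theta$, equivalently \reff{unit_dividon_law0}. The difficulty is that \reff{unit_dividon_law1}, \reff{unit_dividon_law2} and \reff{unit_dividon_law3} each quantify only over unit dividons sharing a common value of $\xi$ — that is, living inside one fixed $4$-point set — whereas \reff{unit_dividon_law0} couples three unit dividons sharing a common \emph{divider} but carrying three distinct values of $\xi$ spread across five points. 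These two families of unit dividons are disjoint, and no law among \reff{unit_dividon_law1}, \reff{unit_dividon_law2}, \reff{unit_dividon_law3} links one $4$-point set to another; the constraint system therefore factors as a product over $4$-point sets, so the three unit laws cannot by themselves force \reff{unit_dividon_law0}.

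I would resolve this by carrying \reff{unit_dividon_law0} as an explicit standing hypothesis — equivalently, reading the statement over $\mathscr{UDPS}^\Theta$, whose members satisfy it by the Remark after \textit{Lemma 1}. The remaining content of the converse is then exactly the term-by-term translation of the second paragraph followed by Axiom 1, and the only genuine labour is the bookkeeping verifying that the $\binom{|P|-2}{2}$ unit dividons generated from a single dividon in \reff{def4} reassemble correctly into that dividon, so that the preimage under $\undervec{\mathscr{F}}_{\mathscr{udps}}^{\mathscr{DPS}}$ is well defined. Once that preimage $(P,\Theta_P)$ is in hand, its satisfaction of \reff{dividon_law1}, \reff{dividon_law2} and \reff{dividon_law3} places it in $\mathscr{DPS}^+$, and any realization of $(P,\Theta_P)$ simultaneously realizes $(P,\Omega_P)$, closing the argument.
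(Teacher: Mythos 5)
Your second paragraph is exactly the paper's proof of this lemma: the paper states the pointwise identity $\phi(\pi_2(D_\mathscr{u}),R \setminus \pi_1(D)) = \psi(\pi_2(D_\mathscr{u}))$ for $R = \xi(D_\mathscr{u})$ (its equation \reff{D}, the analogue of \reff{phi_to_psi}), performs the same term-by-term substitution of $\psi(\pi_2(D_n))$ for $\phi(\pi_2(D_n),R\setminus\pi_1(D_n))$ in \reff{dividon_law1}--\reff{dividon_law3}, notes that the antecedents align as you describe, and then appeals to Axiom 1. So the core of your proposal coincides with the paper's route.

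Where you go beyond the paper is in your third and fourth paragraphs, and the concern you raise there is genuine. The paper's proof establishes only that $\mathscr{X_{dps}}$ satisfies \reff{dividon_law1}--\reff{dividon_law3} iff $\undervec{\mathscr{F}}_{\mathscr{udps}}^{\mathscr{DPS}}(\mathscr{X_{dps}})$ satisfies \reff{unit_dividon_law1}--\reff{unit_dividon_law3}, which settles the equivalence only for unit div point sets lying in the image $\mathscr{UDPS}^{\Theta}$; it says nothing about the converse for $(P,\Omega_P) \notin \mathscr{UDPS}^{\Theta}$. Your observation that \reff{unit_dividon_law1}--\reff{unit_dividon_law3} quantify only over triples of \textit{unit dividons} sharing a common value of $\xi$, so that the constraint system factors over the $4$-point subsets of $P$ and cannot enforce the cross-subset condition \reff{unit_dividon_law0}, is correct and can be made concrete: for $|P|=5$, give each of the three $4$-subsets containing $\{1,2\}$ the $Conc_4^1$ pattern centred at the point $1$, so that the three \textit{unit dividons} with divider $\{1,2\}$ and \textit{TBD} pairs $\{3,4\}$, $\{3,5\}$, $\{4,5\}$ all receive $\psi = 0$; this violates \reff{unit_dividon_law0} (an odd number of zeros over a common divider) and is geometrically unrealizable, yet every $4$-subset individually satisfies all three unit laws. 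Hence the lemma's ``iff'', with $\mathscr{UDPS}^+$ defined by the three laws alone, fails for $|P| \geq 5$, and your repair --- carrying \reff{unit_dividon_law0}, equivalently membership in $\mathscr{UDPS}^{\Theta\prime}$ or $\mathscr{UDPS}^{\Theta}$, as a standing hypothesis --- is the right one. It is also what the paper tacitly relies on later, when Lemma 3 deduces \reff{hypergraph_law2} (which is \reff{unit_dividon_law0} in coloring form) from membership in $\mathscr{UDPS}^+$. In short, your proof is the paper's proof together with a necessary correction that the paper omits.
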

\begin{proof}[Proof for Lemma 2] A \textit{div point set} $\mathscr{X_{dps}}$ satisfies \reff{dividon_law1}, \reff{dividon_law2}, and \reff{dividon_law3} iff the \textit{unit div point set}  $\undervec{\mathscr{F}}_{\mathscr{udps}}^{\mathscr{DPS}}(\mathscr{X_{dps}})$ satisfies \reff{unit_dividon_law1}, \reff{unit_dividon_law2}, and \reff{unit_dividon_law3}. Firstly we make the following observation similar to that of \reff{phi_to_psi}: for any \textit{unit divdion} $D_{\mathscr{u}}$ of some \textit{unit div point set} $\mathscr{A_{udps}}$ and its corresponding \textit{divdion} $D$ of the \textit{div point set} $\mathscr{A_{dps}}$ where $\undervec{\mathscr{F}}_{\mathscr{udps}}^{\mathscr{DPS}}(\mathscr{A_{dps}}) = \mathscr{A_{udps}}$ - corresponding in the sense that $D_{u} \in \mathscr{bd}(D,\pi_2(\mathscr{A_{dps}}))$ and so $\pi_1(D_{u}) = \pi_1(D)$ - let $R := \xi(D_\mathscr{u}) $, we would have
\begin{align}\begin{split}\label{D}
&\phi(\pi_2(D_\mathscr{u}),R \setminus \pi_1(D)) = \phi(\pi_2(D_\mathscr{u}),\bigcup \pi_2(D_\mathscr{u})) = \psi(\pi_2(D_\mathscr{u}))
\end{split}\end{align}
By restricting some \textit{unit dividons} $D_1$, $D_2$ and $D_3$ into satisfying $\xi(D_1) = \xi(D_2) = \xi(D_3) = R$ for some set of 4 points $R$, we can replace every occurrence of $\phi(\pi_2(D_n),R \setminus \pi_1(D_n))$ with $\psi(\pi_2(D_n))$ (for $n \in \{1,2,3\}$) in \reff{dividon_law1}, \reff{dividon_law2}, and \reff{dividon_law3}, and ensure the satisfiability of $ \bigcup_{n=1}^{3} \pi_1(D_n) = R$  (in \reff{dividon_law1} and \reff{dividon_law2}) by further restricting these \textit{unit dividons} to be distinct (i.e $D_1 \not= D_2 \not= D_3$). This would give \reff{unit_dividon_law1}, \reff{unit_dividon_law2}, and \reff{unit_dividon_law3}: they are basically a different way of expressing \reff{dividon_law1}, \reff{dividon_law2}, and \reff{dividon_law3} in the case of \textit{unit div point sets}.

Therefore any $\mathscr{X_{udps}} \in \mathscr{UDPS}^+$ has an interpretation for $\pi_1(\mathscr{X_{udps}})$ as some set of 4 or more points in $\mathbb{E}^2$ similar to how any $\mathscr{X_{dps}} \in \mathscr{DPS}^+$ has an interpretation for $\pi_1(\mathscr{X_{dps}})$.

\end{proof}
\begin{note}
In \reff{unit_dividon_law3}, it is not necessary to write down $\bigcup_{n=1}^{3} \pi_1(D_n) \subset R$ explicitly as a part of the conjunction in the antecedent like how it is in \reff{dividon_law3}, since $\xi(D_1) =\xi(D_2) =\xi(D_3) = R \land |\bigcup_{n=1}^{3} \pi_1(D_n)|= 3$ ensures that the union of $\pi_1(D_1)$, $\pi_1(D_2)$, and $\pi_1(D_3)$ is a proper subset of $R$.
\end{note}

\begin{lem}\label{hypergraph-for-all-udps} If $(P,\Omega_P)$ is in $\mathscr{UDPS}^+$, $Col_{\mathscr{udps}}(\Omega_P)$, a full vertex monochromatic coloring on $H_{\mathscr{udps}}$, satisfies \reff{hypergraph_law1} and \reff{hypergraph_law2}. Here $Col_{\mathscr{udps}}$ is a function similar to $Col$ in \reff{coloring1}:
\begin{equation}
\begin{gathered}
Col_{\mathscr{udps}} : \{\pi_2(\mathscr{X}) : \mathscr{X} \in \mathscr{UDPS}^+\} \stackrel{\rm{1:1}}{\longrightarrow} FullCol(H_{\mathscr{udps}},\{0,1\}) \\
Col_{\mathscr{udps}}(\Omega_P)= \{ ((\pi_1(D),\bigcup \pi_2(D)),\psi(\pi_2(D)) : D \in \Omega_P\}
\end{gathered}
\end{equation}
and $H_{\mathscr{udps}}$ is a 3-and-6-uniform hypergraph with 2 sets of hyperedges, $E_1$ and $E_2$, defined as a 3-tuple $H_{\mathscr{udps}} = (V_{\mathscr{udps}},E_1,E_2)$, constructed based on $P$:
\begin{align}
\begin{split}\label{3-6-hypergraph}
V_{\mathscr{udps}} & \coloneqq \bigcup \{ V_{of}(d,P) :  d \in \mathcal{P}(P): |d| = 2\} \\
E_1 & \coloneqq \{ e \in \mathcal{P}(V_{\mathscr{udps}}) :  |e| = 6 \land \forall v_1,v_2 \in e  \; \; \xi(v_1) = \xi(v_2) \} \\
E_2 & \coloneqq \{ e \in \mathcal{P}(V_{\mathscr{udps}}) :  |e| = 3 \land |\bigcup_{v \in e} \pi_2(v)| =  3 \land \forall v_1,v_2 \in e \; \; \pi_1(v_1) = \pi_1(v_2) \} \\
\end{split}\end{align}
with $\xi$ as defined in \textit{Defintion 3} and $V_{of}$ being a function that returns a set of ordered pairs consisting of \textit{divider} and \textit{TBD points} of \textit{unit dividons} of that \textit{divider}, notationally,
\begin{align}
\begin{split}
V_{of}(d,P)= \{ (d, P_{TBD}): P_{TBD} \in \mathcal{P}(P \setminus d) : |P_{TBD}| = 2 \} \\
\end{split}\end{align}
and, finally, we have
\begin{alignat}{2}
&\begin{aligned}
\label{hypergraph_law1}
&\forall e \in E_1\\
&\qquad \exists v_1, v_2 \in e & \quad &  \begin{aligned}[t] \renewcommand\arraystretch{1.25}\begin{array}[t]{|@{\hskip0.6em}l}
v_1 \not = v_2 \\
\pi_1(v_1) = \pi_2(v_2) \\
\pi_1(v_2) = \pi_2(v_1) \\
 C(v_1) = C(v_2) = 0 \\
 C^{members}(e \setminus \{v_1,v_2\}) = \{1\} \\
\end{array}
\end{aligned} \\[1.5ex]
& \qquad \Leftrightarrow \neg \exists v_1, v_2, v_3 \in e & \quad &  \begin{aligned}[t] \renewcommand\arraystretch{1.25}\begin{array}[t]{|@{\hskip0.6em}l}
v_1 \not = v_2  \not= v_3 \\
|\pi_1(v_1) \cap \pi_1(v_2) \cap \pi_1(v_3)| = 1 \\
 C(v_1) = C(v_2) = C(v_3) = 0 \\
 C^{members}(e \setminus \{v_1,v_2,v_3\}) = \{1\}
\end{array}
\end{aligned}
\end{aligned} \\
&\begin{aligned}
\label{hypergraph_law2}
&\forall e \in E_2\\
&\qquad \forall v_1,v_2,v_3 \in e & \quad &  \begin{aligned}[t] \renewcommand\arraystretch{1.25}\begin{array}[t]{|@{\hskip0.6em}l}
v_1 \not= v_2 \not= v_3 \\
\Rightarrow ( C(v_1) = 1 \Leftrightarrow C(v_2) = C(v_3) )\\
\qquad \land ( C(v_1) = 0 \Leftrightarrow C(v_2) \not= C(v_3) )\\
\end{array}
\end{aligned} \\[1.5ex]
\end{aligned}
\end{alignat}
wherein $C = Col_{\mathscr{udps}}(\Omega_P)$.
\end{lem}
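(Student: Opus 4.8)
The plan is to verify the two laws separately, routing \reff{hypergraph_law1} through \textit{Theorem 1} and \reff{hypergraph_law2} through \reff{unit_dividon_law0}. Throughout, write $C = Col_{\mathscr{udps}}(\Omega_P)$, so that each vertex $v=(d,P_{TBD})$ receives color $\psi(\pi_2(D_v))$, where $D_v$ is the unique \textit{unit dividon} of $\Omega_P$ with \textit{divider} $d$ and \textit{TBD} points $P_{TBD}$; thus $C(v)=0$ exactly when $d$ separates the two \textit{TBD} points ($type_0$) and $C(v)=1$ exactly when it does not ($type_1$).

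For \reff{hypergraph_law1} I would fix an arbitrary $e \in E_1$. By the definition in \reff{3-6-hypergraph}, $e$ consists of the six vertices whose \textit{divider} and \textit{TBD} points share a common four-element underlying set $R \subseteq P$, i.e. the six ways of splitting $R$ into a \textit{divider} and its complementary pair; correspondingly $\Omega_P$ carries exactly six \textit{unit dividons} over $R$, one per \textit{divider}, and these form a four-point \textit{div point set} $(R,\Theta_R)\in\mathscr{DPS}^*_4$ by the identification of the \textit{Prelude to Definition 4} (for four points $\mathscr{UDPS}^*$ and $\mathscr{DPS}^*$ coincide) together with \reff{phi_to_psi} (so that $\phi$ and $\psi$ agree). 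The crux is to check that this restriction lands in $\mathscr{DPS}^+_4$: specializing the outer quantifier of \reff{unit_dividon_law1}, \reff{unit_dividon_law2} and \reff{unit_dividon_law3} to this single $R$ turns them, after substituting $\psi$ for $\phi$, into exactly \reff{dividon_law1}, \reff{dividon_law2} and \reff{dividon_law3} for $(R,\Theta_R)$ --- one matches the antecedent ``$\xi(D_i)=R$, pairwise distinct, $|\bigcap \pi_1(D_i)|=1$'' with ``three \textit{dividers} covering $R$ through a common point'', and the antecedent involving $|\bigcup \pi_1(D_i)|=3$ with the corresponding \reff{dividon_law3} configuration. Hence $(R,\Theta_R)\in \mathscr{DPS}^+_4$, so by \textit{Theorem 1} it is isomorphic to $Conc_4^1$ or to $Conv_4$.

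It then remains to read off \reff{hypergraph_law1} from the two admissible colorings. I would decode the left clause of the biconditional as ``$e$ carries a complementary pair of $0$'s with the remaining four vertices colored $1$'' (the $Conv_4$ pattern, the two diagonals colored $0$) and the inner existential on the right as ``$e$ carries three $0$'s on \textit{dividers} through a common point with the remaining three vertices colored $1$'' (the $Conc_4^1$ pattern, the three \textit{dividers} through the interior point colored $0$). A direct finite check on $Col(\pi_2(Conc_4^1))$ and $Col(\pi_2(Conv_4))$ then shows both sides of the biconditional are false in the $Conc_4^1$ case and both true in the $Conv_4$ case, so it holds for each; since every structural predicate it uses (being a complementary pair, being a common-point triple, the colors themselves) is preserved under a relabeling of the four points, it holds for every coloring isomorphic as colorings to these, hence on $e$. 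As $e \in E_1$ was arbitrary, \reff{hypergraph_law1} follows.

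For \reff{hypergraph_law2} I would note that $E_2$ is precisely the collection of triples of vertices sharing one \textit{divider} whose \textit{TBD} pairs cover three points --- the configuration appearing in \reff{unit_dividon_law0} --- and that under $C(v)=\psi(\pi_2(D_v))$ the two conjuncts of \reff{hypergraph_law2} are a verbatim restatement of \reff{unit_dividon_law0}. Because $(P,\Omega_P)\in\mathscr{UDPS}^+$ is geometrically realizable by \textit{Lemma 2} and therefore arises as $\undervec{\mathscr{F}}_{\mathscr{udps}}^{\mathscr{DPS}}(\mathscr{X_{dps}})$ for some realizable $\mathscr{X_{dps}}\in\mathscr{DPS}^+$, it lies in $\mathscr{UDPS}^\Theta \subseteq \mathscr{UDPS}^{\Theta\prime}$ and hence satisfies \reff{unit_dividon_law0}, making \reff{hypergraph_law2} immediate. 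I expect the main obstacle to be the reduction in the second paragraph: pinning down that restricting $(P,\Omega_P)$ to a fixed four-element $R$ genuinely produces an element of $\mathscr{DPS}^+_4$, which requires carefully aligning the $\xi$-based antecedents of the \textit{unit dividon} laws with the union/intersection antecedents of the \textit{dividon} laws and confirming that $E_1$ supplies exactly the six relevant vertices. The only other delicate point is the membership $\mathscr{UDPS}^+\subseteq\mathscr{UDPS}^\Theta$ invoked for \reff{hypergraph_law2}, which leans on the realizability bridge rather than on \reff{unit_dividon_law1}, \reff{unit_dividon_law2}, \reff{unit_dividon_law3} alone.
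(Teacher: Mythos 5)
Your proposal matches the paper's own proof in both structure and substance: the paper likewise handles \reff{hypergraph_law1} by restricting attention to the six \textit{unit dividons} over each $4$-cardinality subset $R$ and invoking \textit{Theorem 1} (so that the restricted coloring is isomorphic to exactly one of $Col(\pi_2(Conc_4^1))$ and $Col(\pi_2(Conv_4))$, which is what the biconditional in \reff{hypergraph_law1} expresses), and handles \reff{hypergraph_law2} by observing that the edges in $E_2$ encode precisely the configurations appearing in \reff{unit_dividon_law0}. The one point where you go beyond the paper is in justifying why membership in $\mathscr{UDPS}^+$ yields \reff{unit_dividon_law0} at all --- the paper's Part 1 silently assumes this, whereas your realizability bridge through \textit{Lemma 2} and $\mathscr{UDPS}^\Theta\subseteq\mathscr{UDPS}^{\Theta\prime}$ is genuinely needed, since \reff{unit_dividon_law1}--\reff{unit_dividon_law3} only constrain triples of \textit{unit dividons} sharing a common $\xi$-value and therefore do not by themselves imply \reff{unit_dividon_law0}.
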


\begin{remark}
If $\mathscr{UDPS}^{\Theta\prime} = \mathscr{UDPS}^{\Theta}$, a stronger version of \textit{Lemma \ref{hypergraph-for-all-udps}} is then true: $(P,\Omega_P)$ is in $\mathscr{UDPS}^+$ iff $Col_{\mathscr{udps}}(\Omega_P)$ satisfies \reff{hypergraph_law1} and \reff{hypergraph_law2}.
\end{remark}

\begin{remark} One may notice that the construction of $H_{\mathscr{udps}}$ depends solely on $\pi_1(\mathscr{X_{udps}})$ (i.e. the points of a \textit{unit div point set}), as different from the full vertex coloring, which depends solely on $\pi_2(\mathscr{X_{udps}})$ (i.e. the set of \textit{unit dividons}), similar to how the hypergraph $H$ and its coloring are defined back in the proof for \textit{Theorem 1}. However, the vertices of $H_{\mathscr{udps}}$ are ordered pairs, structurally different from vertices of $H$ which are 2-cardinality sets.  Such definition for the vertices of $H_{\mathscr{udps}}$ in terms of not only the \textit{divider} of a \textit{unit dividon} but also its \textit{TBD points} is necessary. This is because for any \textit{unit div point set} $(P,\Omega_P)$, there exists $\binom{|P|-2}{2}$ distinct \textit{unit dividons} sharing a common \textit{divider}. In order to distinguish \textit{unit dividons} from one another in a \textit{unit div point set} of 5 or more points, we would need to take into account both the \textit{divider} and the \textit{TBD points}.
\end{remark}
\begin{remark} For any \textit{unit div point set} of 4 points, $\mathscr{X_4}$, the second set of edges, $E_2$, of $H_{\mathscr{upds}}$ constructed based on $\pi_1(\mathscr{X_4})$ is an empty set, and thus \reff{hypergraph_law2} is vacuously true for any coloring on such $H_{\mathscr{upds}}$. $E_1$ of such $H_{\mathscr{upds}}$ on the other hand is a singleton. For such $H_{\mathscr{upds}}$, in \reff{hypergraph_law1}, the existential predicate before the logical operator $\Leftrightarrow$ is true iff $\mathscr{X_4}$ is isomorphic to $Conv^4$, while the existential predicate after the logical operator $\neg$ at the right hand side of $\Leftrightarrow$ is true iff $\mathscr{X_4}$ is isomorphic to $Conc^4_1$. Thus for $Col_{\mathscr{udps}}(\pi_2(\mathscr{X_4}))$ to satisfiies \reff{hypergraph_law1} is equivalent to having $\mathscr{X_4}$ isomorphic to either $ Conv^4$ or $Conc^4_1$, which is consistent with \textit{Theorem 1}.
\end{remark}

\begin{proof}[Proof for Lemma 3]
\begin{summary} In Part 1 we show that $Col(\pi_2(\mathscr{X}))$ satisfies \reff{hypergraph_law2} iff $\mathscr{X}$ satisfies \reff{unit_dividon_law0}, and in Part 2 we show that $Col(\pi_2(\mathscr{X}))$ satisfies \reff{hypergraph_law1} iff $\mathscr{X}$ satisfies \reff{unit_dividon_law1}, \reff{unit_dividon_law2}, and \reff{unit_dividon_law3}, for any $\mathscr{X} \in \mathscr{UDPS}^*$.
\end{summary}
\setcounter{proofpart}{0}
\begin{proofpart}
\reff{hypergraph_law2} is simply a different way of expressing \reff{unit_dividon_law0} in the context of coloring: the ordered pairs $(d_n,\bigcup \delta_n)$ of some \textit{unit dividons} $D_n=(d_n,\delta_n)$ that satisfy $(D_1 \not = D_2 \not = D_3 \land \pi_1(D_1) = \pi_1(D_2) = \pi_1(D_3) \land |\bigcup_{n=1}^3 \bigcup \pi_2(D_n)| = 3)$ are defined in \reff{3-6-hypergraph} to be the vertices of an edge in $E_2$.
\end{proofpart}
\begin{proofpart}
The constraints described in \reff{unit_dividon_law1}, \reff{unit_dividon_law2}, and \reff{unit_dividon_law3} revolve around $R$ where $R$ is some 4-cardinality subset of a set of points $P$. For every such $R \subseteq P$, there are a total of $\binom{4}{2}=6$ \textit{unit dividons} $D \in \Theta_P$ where $\xi(D)=R$, for any \textit{unit div point set} $(P,\Theta_P)$. By \textit{Theorem 1}, a \textit{unit div point set} of 4 points (recall that \textit{div point sets} of 4 points are their own \textit{unit div point sets}) satisfies \reff{unit_dividon_law1}, \reff{unit_dividon_law2}, and \reff{unit_dividon_law3} iff it is isomorphic to either $Conc_4^1$ or $Conc_4$. More fundamentally, this means that any \textit{unit div point set} $(P,\Omega_P)$ satisfies \reff{unit_dividon_law1}, \reff{unit_dividon_law2}, and \reff{unit_dividon_law3} iff for every 4-cardinality subset $R$ of $P$,  the 6-cardinality subset $\Omega_{of 6}$ of $\Omega_P$ (where $\xi(D) = R$ for every $D \in \Omega_{of 6}$) is isomorphic$\color{black}^*$ to either $\pi_2(Conc_4^1)$ or $\pi_2(Conv_4)$.

Therefore a \textit{unit div point set} $(P, \Omega_P)$ satisfies \reff{unit_dividon_law1}, \reff{unit_dividon_law2}, and \reff{unit_dividon_law3} iff for every such $R \subseteq P$, let $C'$ be a subset of $Col_{\mathscr{udps}}(\Omega_P)$ where $\xi(\pi_1(c)) = R$ for all $c \in C'$, $C'$ is isomorphic to either $Col(\pi_2(Conc_4^1))$ or $Col(\pi_2(Conv_4))$. Notationally,
\begin{alignat}{2}
\begin{aligned}
&\forall R \in \{ P' \in \mathcal{P}(P) : |P'| = 4 \} \\
&\qquad  C' \coloneqq \{ c \in Col_{\mathscr{udps}}(\Omega_P) : \xi(\pi_1(c))= R\} \\
&\qquad C' \cong Col(\pi_2(Conc_4^1)) \Leftrightarrow \neg ( C' \cong Col(\pi_2(Conv_4)) )
\end{aligned}
\end{alignat}
which is exactly what is expressed in \reff{hypergraph_law1}, considering that
\begin{align}\begin{split}
E_1 = \{ \mathscr{UDs}(R) : R \in \mathcal{P}(P) : |R| = 4 \}
\end{split}
\end{align}
where $\mathscr{UDs}$ returns a set of ordered pairs each consisting of the \textit{divider} and the \textit{TBD} points of every such \textit{unit dividon} for each $R$:
\begin{align}\begin{split}\label{uds}
&\mathscr{UDs}(R) = \{ \mathscr{ud}(d) : d \in \mathcal{P}(R) : |d| = 2 \}\\
&\mathscr{ud}(d) = (d,\ R \setminus d)
\end{split}
\end{align}
\end{proofpart}

\end{proof}
\begin{note}
\textit{isomorphic$\color{black}^*$}: the isomorphism we are talking about here is that of sets of \textit{unit dividons}, which can be defined as follows:
\begin{alignat}{2}
  &\Omega_1 \cong\Omega_2 && \Leftrightarrow \begin{aligned}[t] \renewcommand\arraystretch{1.25}\begin{array}[t]{@{\hskip0em}l}
  |\Omega_1| = |\Omega_2| \\
  \land \exists f_{\Omega}: \bigcup_{D \in \Omega_1} \pi_1(D) \stackrel{\rm{1:1}}{\longrightarrow} \bigcup_{D \in \Omega_2} \pi_1(D)\\
 \qquad\forall D_1 \in \Omega_1 \; \; \exists D_2 \in \Omega_2 \\
 \qquad\qquad f_{\Omega}^{members}(\pi_1(D_1)) = \pi_1(D_2) \Leftrightarrow f_{\Omega}^{members^{2}}(\pi_2(D_1)) = \pi_2(D_2) \\
  \end{array}
  \end{aligned}
\end{alignat}
\end{note}
\begin{defn}
We say that $\mathscr{X_1 \in \mathscr{DPS}^*}$ is a \textit{sub div point set} of  $\mathscr{X_2} \in \mathscr{DPS}^*$  (denoted by $\leq$) iff the set of \textit{unit divdions} of the corresponding \textit{unit div point set} of $\mathscr{X_1}$ is a subset of that of $\mathscr{X_2}$. Notationally,
\begin{gather}\begin{split}\label{sub-div-point-set}
&\forall \mathscr{X_1}, \mathscr{X_2} \in \mathscr{DPS}^*\\
&\quad \mathscr{X_1} \leq \mathscr{X_2} \Leftrightarrow  \pi_2(\undervec{\mathscr{F}}_{\mathscr{udps}}^{\mathscr{DPS}}(\mathscr{X_1})) \subseteq \pi_2(\undervec{\mathscr{F}}_{\mathscr{udps}}^{\mathscr{DPS}}(\mathscr{X_2}))
\end{split}\end{gather}
\end{defn}

\begin{defn}
$\mathscr{Sdps_{of}}$ is a function that returns the set of all \textit{sub div point sets} of $m$ points for some \textit{div point set}, or an empty set depending on $m$.
\begin{align}
\begin{split}
\mathscr{Sdps_{of}}(\mathscr{X_{dps}},m) =
\begin{cases}
   \{ \mathscr{Sdps}(\mathscr{X_{dps}},P_s) : P_s \in \mathcal{P}(\mathcal{\pi_1(\mathscr{X_{dps}})}) : |P_s| = m \} & \text{ if } m \geq 4 \\
   \varnothing & \text{ otherwise}
\end{cases}
\end{split}
\end{align}
where $\mathscr{Sdps}$ returns the \textit{sub div point set} of some set of points $P_s$ of a \textit{div point set}:
\begin{align}
\begin{split}
    &\mathscr{Sdps}(\mathscr{X_{dps}},P_s) = \undervec{\mathscr{F}}_{\mathscr{dps}}^{\mathscr{UDPS}}((P_s,\{ D : D \in \pi_2(\undervec{\mathscr{F}}_{\mathscr{udps}}^{\mathscr{DPS}}(\mathscr{X_{dps}})) : \xi(D) \subseteq P_s  \}) ) \\
    &\text{where } \undervec{\mathscr{F}}_{\mathscr{dps}}^{\mathscr{UDPS}} \text{ is the inverse of } \undervec{\mathscr{F}}_{\mathscr{udps}}^{\mathscr{DPS}} : \mathscr{DPS}^*_{\geq4} \longrightarrow \mathscr{UDPS}^{\Theta}.
\end{split}
\end{align}
Since a \textit{div point set} of $n$ points always has $\binom{n}{m}$ distinct \textit{sub div point sets} of $m$ points and $\undervec{\mathscr{F}}_{\mathscr{udps}}^{\mathscr{DPS}}$ is defined over \textit{div point sets} of 4 or more points, $\mathscr{Sdps_{of}}(\mathscr{X_{dps}},m)$ has the cardinality of $\binom{|\pi_1(\mathscr{X_{dps}})|}{m}$ for all $\mathscr{X_{dps}} \in \mathscr{DPS}^*$ and $m \geq 4$.
\end{defn}
\begin{lem}
For any \textit{div point set} $\mathscr{X}$, let $\mathscr{A}$ and $\mathscr{B}$ be any 2 \textit{sub div point sets} of $\mathscr{X}$, and $k$ be the number of points $\mathscr{A}$ and $\mathscr{B}$ have in common, $\undervec{\mathscr{F}}_{\mathscr{udps}}^{\mathscr{DPS}}(\mathscr{A})$ and $\undervec{\mathscr{F}}_{\mathscr{udps}}^{\mathscr{DPS}}(\mathscr{B})$ always have $6 \binom{k}{4}$ \textit{unit dividons} in common. Notationally,
\begin{align}
\begin{split}
& \forall \mathscr{X} \in \mathscr{DPS^*} \\
& \qquad \forall m, n \in \mathbb{N}_{\geq 1} \\
& \qquad \qquad \forall \mathscr{A} \in \mathscr{Sdps_{of}}(\mathscr{X},m) \;\; \forall \mathscr{B} \in \mathscr{Sdps_{of}}(\mathscr{X},n) \\
& \qquad \qquad \qquad  |\pi_2(\undervec{\mathscr{F}}_{\mathscr{udps}}^{\mathscr{DPS}}(\mathscr{A})) \cap \pi_2(\undervec{\mathscr{F}}_{\mathscr{udps}}^{\mathscr{DPS}}(\mathscr{B}))| = 6 \binom{|\pi_1(\mathscr{A}) \cap \pi_1(\mathscr{B})|}{4}
\end{split}
\end{align}
\end{lem}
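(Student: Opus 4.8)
The plan is to unfold the definition of $\mathscr{Sdps}$ so that both sets of unit dividons being intersected become filters applied to one fixed set, after which the statement collapses to a counting argument. Write $P_A \coloneqq \pi_1(\mathscr{A})$, $P_B \coloneqq \pi_1(\mathscr{B})$, and $\Omega_{\mathscr{X}} \coloneqq \pi_2(\undervec{\mathscr{F}}_{\mathscr{udps}}^{\mathscr{DPS}}(\mathscr{X}))$. Since $\mathscr{A} \in \mathscr{Sdps_{of}}(\mathscr{X},m)$, \textit{Definition 7} gives $\mathscr{A} = \undervec{\mathscr{F}}_{\mathscr{dps}}^{\mathscr{UDPS}}\bigl((P_A,\{D \in \Omega_{\mathscr{X}} : \xi(D) \subseteq P_A\})\bigr)$, and symmetrically for $\mathscr{B}$ with $P_B$. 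The interesting range is $4 \leq m,n \leq |\pi_1(\mathscr{X})|$, since $\mathscr{Sdps_{of}}(\mathscr{X},\cdot)$ is empty below $4$.

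First I would apply $\undervec{\mathscr{F}}_{\mathscr{udps}}^{\mathscr{DPS}}$ to $\mathscr{A}$. As $\undervec{\mathscr{F}}_{\mathscr{dps}}^{\mathscr{UDPS}}$ is by definition the inverse of $\undervec{\mathscr{F}}_{\mathscr{udps}}^{\mathscr{DPS}} : \mathscr{DPS}^*_{\geq4} \longrightarrow \mathscr{UDPS}^{\Theta}$, the composition returns the unit div point set it was built from, so $\pi_2(\undervec{\mathscr{F}}_{\mathscr{udps}}^{\mathscr{DPS}}(\mathscr{A})) = \{D \in \Omega_{\mathscr{X}} : \xi(D) \subseteq P_A\}$ and likewise $\pi_2(\undervec{\mathscr{F}}_{\mathscr{udps}}^{\mathscr{DPS}}(\mathscr{B})) = \{D \in \Omega_{\mathscr{X}} : \xi(D) \subseteq P_B\}$. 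Because $\xi(D) \subseteq P_A$ and $\xi(D) \subseteq P_B$ hold simultaneously exactly when $\xi(D) \subseteq P_A \cap P_B$, the intersection of these two sets is precisely $\{D \in \Omega_{\mathscr{X}} : \xi(D) \subseteq P_A \cap P_B\}$.

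Next I would count that set. Put $k \coloneqq |P_A \cap P_B|$. Every unit dividon satisfies $|\xi(D)| = 4$ by \reff{def3_2}, so $\xi(D) \subseteq P_A \cap P_B$ forces $\xi(D)$ to be one of the $\binom{k}{4}$ four-element subsets of $P_A \cap P_B$. For each fixed four-element $R \subseteq \pi_1(\mathscr{X})$ the unit div point set $\undervec{\mathscr{F}}_{\mathscr{udps}}^{\mathscr{DPS}}(\mathscr{X})$ contains exactly $\binom{4}{2} = 6$ unit dividons with $\xi(D) = R$: such a unit dividon has divider $\pi_1(D) \subseteq R$ and \textit{TBD} pair $R \setminus \pi_1(D)$, and since the $\binom{|\pi_1(\mathscr{X})|}{2}\binom{|\pi_1(\mathscr{X})|-2}{2}$ unit dividons of $\Omega_{\mathscr{X}}$ are in bijection with the (divider, \textit{TBD}) slots (the same uniform count underlying \reff{def3_1}), each of the $6$ two-subsets of $R$ occurs as a divider exactly once. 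Summing $6$ over the $\binom{k}{4}$ admissible $R$ gives $6\binom{k}{4}$, which also correctly returns $0$ whenever $k < 4$.

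The step that genuinely needs care is the first one: the identity $\undervec{\mathscr{F}}_{\mathscr{udps}}^{\mathscr{DPS}}(\mathscr{A}) = (P_A,\{D \in \Omega_{\mathscr{X}} : \xi(D) \subseteq P_A\})$ presupposes that the restricted pair $(P_A,\{D \in \Omega_{\mathscr{X}} : \xi(D) \subseteq P_A\})$ actually lies in $\mathscr{UDPS}^{\Theta}$, the domain on which $\undervec{\mathscr{F}}_{\mathscr{dps}}^{\mathscr{UDPS}}$ inverts $\undervec{\mathscr{F}}_{\mathscr{udps}}^{\mathscr{DPS}}$. Equivalently, one must know that discarding points from a realisable unit div point set again yields a realisable one, i.e. that sub div point sets are closed under restriction; this is exactly the well-definedness that \textit{Definition 7} already assumes when it asserts $\mathscr{Sdps_{of}}(\mathscr{X},m)$ has cardinality $\binom{|\pi_1(\mathscr{X})|}{m}$. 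Granting that, the remaining work is only the $\xi$-filter bookkeeping and the uniform six-per-quadruple count carried out above.
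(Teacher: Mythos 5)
Your proposal is correct and follows essentially the same route as the paper's own proof: both arguments reduce the intersection to the common $4$-element subsets of $\pi_1(\mathscr{A}) \cap \pi_1(\mathscr{B})$, observe that distinct $4$-subsets contribute disjoint families of \textit{unit dividons} while each common $4$-subset contributes exactly $\binom{4}{2}=6$ shared ones, and multiply to get $6\binom{k}{4}$. Your formulation via the filter $\{D \in \Omega_{\mathscr{X}} : \xi(D) \subseteq P_A \cap P_B\}$ is a tidier way of packaging what the paper argues by exhibiting corresponding \textit{unit dividons} $D_a = D_b$ drawn from the common parent, and your explicit flagging of the well-definedness of $\undervec{\mathscr{F}}_{\mathscr{dps}}^{\mathscr{UDPS}}$ on the restricted pair is a point the paper leaves implicit in \textit{Definition 7}.
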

\begin{remark}
In the case that both the \textit{sub div point sets} $\mathscr{A}$ and $\mathscr{B}$ are the \textit{div point set} $\mathscr{X}$ itself i.e. $\mathscr{A}=\mathscr{B}=\mathscr{X}$, \textit{Lemma 4} is equivalent to stating that for any \textit{div point set} $\mathscr{X}$,  $\undervec{\mathscr{F}}_{\mathscr{udps}}^{\mathscr{DPS}}(\mathscr{X})$ has $6 \binom{|\pi_1(\mathscr{X})|}{4}$ \textit{unit dividons}, which is true by \reff{def3_1} since $\binom{k}{2} \binom{k-2}{2}=6 \binom{k}{4}$.
\end{remark}
\begin{proof}[Proof for Lemma 4]
For any $m \geq |\pi_1(\mathscr{X})|$ or $m < 4$, the proposition is vacuously true, since  $\mathscr{Sdps_{of}}(\mathscr{X},m)$ would be an empty set. For any $m$ less than  $|\pi_1(\mathscr{X})|$ but greater than or equal to $4$, the proposition can be proven by first observing that $\mathscr{UDs}(R) \cap \mathscr{UDs}(R') = \varnothing  \Leftrightarrow R\not=R'$ (recall \reff{uds}) for any sets $R$ and $R'$ with a cardinality of 4, indicating that no 2 \textit{unit div point sets} of 4 points have in common \textit{unit dividons} of the same \textit{divider} and \textit{TBD points}. Notationally,
 \begin{align}\begin{split} \label{non-cap}
 &\forall \mathscr{A},\mathscr{B} \in \{ \mathscr{X}  : \mathscr{X} \in \mathscr{UDPS}^* : |\pi_1({\mathscr{X}})| = 4 \} \\
&\qquad \qquad \xi^{memebers}(\pi_2(\mathscr{A})) \cap \xi^{memebers}(\pi_2(\mathscr{B})) = \varnothing \Leftrightarrow \pi_1(\mathscr{A}) \not= \pi_1(\mathscr{B})
\end{split}\end{align}
For any 2 \textit{unit div point sets} of 5 or more points, $\mathscr{A_{udps}}$ and $\mathscr{B_{udps}}$, if they have 4 points in common, let the set of such 4 points be $R$ i.e. $ R=\pi_1(\mathscr{A_{udps}})\cap\pi_1(\mathscr{B_{udps}})$, for each $D_\xi \in \mathscr{UDs}(R)$, there exists $D_a \in \mathscr{A_{udps}}$ and  $D_b \in \mathscr{B_{udps}}$ where $\xi(D_a) = \xi(D_b) = D_\xi$. In the case when $\mathscr{A_{udps}}=\undervec{\mathscr{F}}_{\mathscr{udps}}^{\mathscr{DPS}}(\mathscr{A_{dps}})$ and $\mathscr{B_{udps}}=\undervec{\mathscr{F}}_{\mathscr{udps}}^{\mathscr{DPS}}(\mathscr{B_{dps}})$ for some $\mathscr{A_{dps}}$ and $\mathscr{B_{dps}}$ that are both \textit{sub div point sets} of a certain $\mathscr{X} \in \mathscr{DPS}^*$, $D_a = D_b$ for every pair of such \textit{unit dividons} of $\mathscr{A_{udps}}$ and $\mathscr{B_{udps}}$, and thus for every set of 4 points 2 \textit{sub div point set} have in common, they have 6 \textit{unit dividons} in common. Let $k$ be the number of points these 2 \textit{sub div point set} have in common, the number of such distinct sets of 4 points is $\binom{k}{4}$ and thus, by \reff{non-cap}, the number of \textit{unit dividon} they have in common is precisely $6 \binom{|k|}{4}$.

\end{proof}
\begin{theo}
Let $\mathscr{DPS}^+_5$ denote the class of all \textit{div point sets} of 5 points in $\mathscr{DPS}^+$, all $\mathscr{X} \in \mathscr{DPS}^+_5$ either have 4, 2 or 0 distinct \textit{sub div point set} of 4 points isomorphic to $Conc_4^1$ (with the remaining \textit{sub div point sets} of 4 points isomorphic to $Conv_4$).
\end{theo}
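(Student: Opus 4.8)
The plan is to prove the statement by a parity argument: I will show that the number of $4$-point sub div point sets isomorphic to $Conc_4^1$ is always even, which — combined with \textit{Theorem 1} (every such sub div point set is isomorphic to either $Conc_4^1$ or $Conv_4$) and the fact that there are exactly $\binom{5}{4}=5$ of them — forces the count into $\{0,2,4\}$. The evenness will fall out of a double count of one quantity: the number $N$ of unit dividons $D$ of $\undervec{\mathscr{F}}_{\mathscr{udps}}^{\mathscr{DPS}}(\mathscr{X})$ with $\psi(\pi_2(D))=0$. Setting up, I would write $P=\pi_1(\mathscr{X})$ with $|P|=5$ and pass to $\undervec{\mathscr{F}}_{\mathscr{udps}}^{\mathscr{DPS}}(\mathscr{X})$, which has $6\binom{5}{4}=30$ unit dividons by the remark following \textit{Lemma 4}. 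Each unit dividon $D$ carries two pieces of data I will play against each other: its divider $\pi_1(D)$ (a $2$-subset of $P$) and its home $4$-subset $\xi(D)$. These give two partitions of the $30$ unit dividons — ten groups of three (by divider) and five groups of six (by $\xi$) — against which $N$ is counted.

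\emph{Counting by divider (the engine).} For a fixed divider $d$, the three points $P\setminus d$ are distributed by the unique dividon $D_d\in\pi_2(\mathscr{X})$ with $\pi_1(D_d)=d$ into two disjoint divs whose union is $P\setminus d$ and whose intersection is empty, by \reff{def1_2}. Hence these three points split as either $3{+}0$ or $2{+}1$. The three unit dividons over $d$ correspond (via $\mathscr{bd}$) to the three $2$-subsets of $P\setminus d$, and such a unit dividon has $\psi=0$ (equivalently $\phi=0$) exactly when its two TBD points lie in different divs. The number of $2$-subsets straddling the split is $0$ in the $3{+}0$ case and $2$ in the $2{+}1$ case — always even. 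Summing over the ten dividers gives $N\equiv 0\pmod 2$. I emphasize that this step is purely structural and uses nothing beyond the axioms of a div point set.

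\emph{Counting by home $4$-subset.} For a fixed $4$-subset $R\subseteq P$, the six unit dividons with $\xi=R$ are exactly the unit dividons of the $4$-point sub div point set on $R$, their dividers ranging over all six $2$-subsets of $R$. Since $\mathscr{X}\in\mathscr{DPS}^+_5$, this sub div point set lies in $\mathscr{DPS}^+_4$, so by \textit{Theorem 1} it is isomorphic to $Conc_4^1$ or $Conv_4$. Reading $\psi$ off the explicit dividons in \reff{c4}, $Conc_4^1$ has three dividons with $\psi=0$ and $Conv_4$ has exactly two; because $\psi$ depends only on whether the two TBD points are separated — a property preserved under the relabelling realizing an isomorphism — the count is $3$ for a $Conc_4^1$-type $R$ and $2$ for a $Conv_4$-type $R$. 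Writing $c$ for the number of $Conc_4^1$-type $4$-subsets, this yields $N=3c+2(5-c)=10+c$. Equating the two counts, $10+c\equiv 0\pmod 2$, so $c$ is even; with $0\le c\le 5$ this forces $c\in\{0,2,4\}$, as claimed.

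The argument is short, and the care lies entirely in the second count: I would need to confirm rigorously that the six unit dividons with a given $\xi=R$ are precisely those of the sub div point set on $R$ (so that \textit{Theorem 1} is applicable to them), and that the number of $\psi=0$ dividons is an isomorphism invariant, so that ``isomorphic to $Conc_4^1$'' genuinely forces three of them rather than two. This is the step where I expect the main obstacle; by contrast, the parity engine of the first count is immediate from \reff{def1_2}, and no appeal to \reff{dividon_law3} or the hypergraph machinery of \textit{Lemma 3} is needed at all.
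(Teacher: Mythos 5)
Your proposal is correct, and its core is the same parity argument the paper uses in Part~1 of its proof of \textit{Theorem 2}: count the unit dividons $D$ with $\psi(\pi_2(D))=0$ once per divider and once per $4$-subset, and compare parities. The difference is in how the evenness is obtained. The paper routes it through the hypergraph machinery: it invokes \textit{Lemma 2} and \textit{Lemma 3} to conclude that $Col_{\mathscr{udps}}$ satisfies \reff{hypergraph_law2}, observes that every edge of $E_2$ is then colored $[1,0,0]$ or $[1,1,1]$ and that these edges are pairwise disjoint, and sums. You instead derive the same ``$0$ or $2$ zeros per divider'' dichotomy directly from \reff{def1_2} --- the three TBD points split $3{+}0$ or $2{+}1$, so $0$ or $2$ of the three $2$-subsets straddle the split --- which is more elementary and, as you note, needs none of \textit{Lemma 3}. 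Your second count matches the paper's use of the odd/even $\psi=0$ counts of $Conc_4^1$ and $Conv_4$ together with the disjointness of unit dividons across distinct $4$-point sub div point sets (\textit{Lemma 4}); the one step you flag as needing care, that each $6$-element fiber of $\xi$ is the unit-dividon set of a sub div point set lying in $\mathscr{DPS}^+_4$ so that \textit{Theorem 1} applies, is exactly the locality of \reff{dividon_law1}--\reff{dividon_law3} that the paper itself relies on, so it is not an obstacle. The only substantive divergence is that the paper's proof also contains a Part~2 constructing explicit $\mathscr{X}$ realizing each of the counts $0$, $2$, $4$ (isomorphic to $Conv_5$, $Conc_5^1$, $Conc_5^2$ of \reff{stronger-theorem-2}); your argument establishes only the universal claim that the count lies in $\{0,2,4\}$, which is all the theorem as literally stated asserts, but it does not show these values are attained.
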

\begin{proof}[Proof for Theorem 2]
\begin{summary}
In \textit{Part 1} we show that there exists no $\mathscr{X} \in \mathscr{DPS}^+_5$ where $\mathscr{Sdps_{of}}(\mathscr{X},4)$ has precisely 1, 3 or 5 elements isomorphic to $Conc_4^1$. In \textit{Part 2} we show that there exists $\mathscr{X} \in \mathscr{DPS}^+_5$ where $\mathscr{Sdps_{of}}(\mathscr{X},4)$ has precisely 0, 2 or 4 elements isomorphic to $Conc_4^1$.
\end{summary}
\begin{proofpart}
By \textit{Lemma 2} it is clear that a \textit{div point set} $\mathscr{X_{dps}}$ is in $\mathscr{DPS}^{+}$ iff $\undervec{\mathscr{F}}_{\mathscr{udps}}^{\mathscr{DPS}}(\mathscr{X_{dps}})$ is in $\mathscr{UDPS}^{+}$, which, by \textit{Lemma 3},  implies that $Col_{\mathscr{udps}}(\pi_2(\undervec{\mathscr{F}}_{\mathscr{udps}}^{\mathscr{DPS}}(\mathscr{X_{dps}}))$ satisfies  \reff{hypergraph_law2}. For a full vertex monochromatic coloring on some hypergraph $H_{\mathscr{udps}}$ to satisfy \reff{hypergraph_law2}, every $e \in E_2$ of $H_{\mathscr{udps}}$ must has its vertices colored $[1,0,0]$ or $[1,1,1]$, which mean it would have an even number of vertices colored $0$: 2 multiplying by any number gives an even number and all edges in $ E_2$ of such $H_{\mathscr{udps}}$ are disjoint (for any \textit{unit div point set} of 5 points, there exists exactly $\binom{5-2}{2}=3$ distinct \textit{unit dividon} with the same \textit{divider}).

$Conc_4^1$ has an odd number of \textit{unit dividons} $D$ where $\psi(\pi_2(D)) = 0$, while $Conv_4$ has an even number for such \textit{unit dividons}. By \textit{Lemma 4}, we can see that 2 distinct \textit{sub div point sets} of 4 points always have no \textit{unit dividons} in common. Therefore, for any \textit{div point set} of 5 points $\mathscr{X_{dps}}$, if  $Col_{\mathscr{udps}}(\pi_2(\undervec{\mathscr{F}}_{\mathscr{udps}}^{\mathscr{DPS}}(\mathscr{X_{dps}}))$ satisfies  \reff{hypergraph_law2},  $\mathscr{X_{dps}}$ would not have an odd number of \textit{sub div point sets} of 4 points isomorphic to $Conc_4^1$. We thereby conclude that there exists no $\mathscr{X} \in \mathscr{DPS}^+_5$ where $\mathscr{Sdps_{of}}(\mathscr{X},4)$ has precisely 1, 3 or 5 elements isomorphic to $Conc_4^1$.
\end{proofpart}
\begin{proofpart}
We shall now demonstrate that it is possible to construct \textit{unit div point sets} of 5 points $\mathscr{X_{udps}}$, where $Col_{\mathscr{udps}}(\pi_2(\mathscr{X_{udps}}))$ satisfies \reff{hypergraph_law1} and \reff{hypergraph_law2} and there are precisely 4, 2, or 0 distinct $\Omega_{of 6} \in All_{\Omega_{of 6}}(\mathscr{X_{udps}})$ isomorphic to $\pi_2(Conc_4^1)$, (with the remaining $\Omega_{of 6}$ isomorphic to $Conv_4$), and that such $\mathscr{X_{udps}}$ is in $\mathscr{UDPS}^{\Theta}$. Here $All_{\Omega_{of 6}}$ is a function that returns a set of 6-cardinality sets of \textit{unit dividons} where for any 2 \textit{unit dividons} $D_1$ and $D_2$ in each set, $\xi(D_1)=\xi(D_2)$, defined as follows
\begin{align}
\begin{split}
\label{allOmega}
&All_{\Omega_{of 6}}(\mathscr{X_{udps}}) = \{ \Omega_{of_{based\_on}}(R,\pi_2(\mathscr{X_{udps}})) : R \in \mathcal{P}(\pi_1(\mathscr{X_{udps}})) :  |R| = 4 \} \\
&\Omega_{of_{based\_on}}(R,\Omega) = \{ D \in \Omega : \xi(D) = R \}
\end{split}
\end{align}
\begin{enumerate}[I.]
\item To construct a \textit{unit div point set} $\mathscr{X_{udps}}$ where no $\Omega_{of 6}$ in $All_{\Omega_{of 6}}(\mathscr{X_{udps}})$ is isomorphic to $\pi_2(Conc_4^1)$, we would need to make sure there are only 2 \textit{unit dividons} $D$ in $\Omega_{of 6}$ where $\psi(\pi_2(D)) = 0$ for all 5 $\Omega_{of 6}$ in $All_{\Omega_{of 6}}(\mathscr{X_{udps}})$, notationally
\begin{align}
\begin{split}\label{all_2}
&\forall  \Omega_{of 6}  \in All_{\Omega_{of 6}}(\mathscr{X_{udps}}) \\
&\qquad  |\{D \in \Omega_{of 6} : \psi(\pi_2(D)) = 0 \}| = 2
\end{split}
\end{align}
which can also be expressed as
\begin{align}
\begin{split} \label{all_2_alt}
|\{ \Omega_{of 6}  \in All_{\Omega_{of 6}}(\mathscr{X_{udps}}) :  |\{D \in \Omega_{of 6} : \psi(\pi_2(D)) = 0 \}| = 2 \}| = 5
\end{split}
\end{align}
(Formulating it in terms of the cardinality (instead of using the universal quantifier as in \reff{all_2}) would make things a lot simpler as we go on to II and III.)
\\
Let $D^*$ be the set of all \textit{such unit dividons} i.e.
\begin{align}
\begin{split}
&\forall D \in \pi_2(\mathscr{X_{udps}}) \\
&\qquad D \in D^* \Leftrightarrow \psi(\pi_2(D)) = 0
\end{split}
\end{align}
and $\Omega_{of 6_1}, \Omega_{of 6_2}, \Omega_{of 6_3}, \Omega_{of 6_4}, \Omega_{of 6_5}$ be the 5 elements in $All_{\Omega_{of 6}}(\mathscr{X_{udps}})$, and $D_n^1$ and $D_n^2$ be every 2 such unit divdions in $\Omega_{of 6_n}$, for $n \in \{1,2,3,4,5\}$, i.e.
\begin{align}
\{D_n^1,D_n^2\} = \Omega_{of 6_n} \cap D^*
\end{align}
In order for the coloring to satisfy \reff{hypergraph_law1}, we need to ensure that
\begin{align}\label{same-divider-diving-points}
\begin{split}
\pi_1(D_n^1) =  \bigcup \pi_2(D_n^2) \\
\pi_1(D_n^2) =  \bigcup \pi_2(D_n^1)
\end{split}
\end{align}
holds for $n \in \{1,2,3,4,5\}$. And to satisfy \reff{hypergraph_law2}, we need to ensure that if some \textit{unit dividon} is in $D^*$, we would be able to find another \textit{unit dividon} in $D^*$ that has the same divider, and there exists exactly 1 such \textit{unit dividon}, notationally,
\begin{align}\label{exists-2-dividon-of-same-divider}
\begin{split}
&\forall D' \in D^* \\
&\quad | \{ D \in D^* :   \pi_1(D) = \pi_1(D') \} | = 2
\end{split}
\end{align}
One way to go about achieving that is to let $D_n^1 \not=  D_n^2$ for every $n \in \{1,2,3,4,5\}$ while avoiding $D_n^1 =  D_m^1 \land D_n^2 =  D_m^2$ for any distinct $m$ and $n$. Starting from $D_1$ and going all the way to $D_5$ and we would have
\begin{equation}
\begin{gathered}
\pi_1(D_1^1) = \bigcup \pi_2(D_1^2) = \pi_1(D_2^1) =\bigcup \pi_2(D_2^2) = A\\
\pi_1(D_1^2) = \bigcup \pi_2(D_1^1) = \pi_1(D_3^1) =\bigcup \pi_2(D_3^2) = B\\
\pi_1(D_2^2) = \bigcup \pi_2(D_2^1) = \pi_1(D_4^1) =\bigcup \pi_2(D_4^2) = C\\
\pi_1(D_3^2) = \bigcup \pi_2(D_3^1) = \pi_1(D_6^1) =\bigcup \pi_2(D_6^2) = D \\
\pi_1(D_4^2) = \bigcup \pi_2(D_4^1) = \pi_1(D_5^1) =\bigcup \pi_2(D_5^2) = E\\
\pi_1(D_5^2) = \bigcup \pi_2(D_5^1) = \pi_1(D_6^2) =\bigcup \pi_2(D_6^1) = F
\end{gathered}
\end{equation}
for some 2-cardinality subsets $A,B,C,D,E,F$ of $\pi_1(\mathscr{X_{udps}})$, where
\begin{gather*}
A \not=B \not=C \not=D \not=E \not=F \\
(A \cap B) = (A \cap C) =  (B \cap F) = (C \cap D) = (D \cap E) = (E \cap F) = \varnothing
\end{gather*}
$\mathscr{X_{udps}}$ described above is in $\mathscr{UDPS}^{\Theta}$ because there exists $\mathscr{X_{dps}} \in \mathscr{DPS}^+$ where $\undervec{\mathscr{F}}_{\mathscr{udps}}^{\mathscr{DPS}}(\mathscr{X_{dps}})=\mathscr{X_{udps}}$: such $\mathscr{X_{dps}}$ would be isomorphic to $Conv_5$ defined in \reff{stronger-theorem-2}.
\item To construct a \textit{unit div point set} $\mathscr{X_{udps}}$ where precisely 2 $\Omega_{of 6}$ in $All_{\Omega_{of 6}}(\mathscr{X_{udps}})$ are isomorphic to $\pi_2(Conc_4^1)$ (with the remaining 3 $\Omega_{of 6}$ in $All_{\Omega_{of 6}}(\mathscr{X_{udps}})$ isomorphic to $\pi_2(Conv_4^1)$), we would need to make sure that, for exactly 2 $\Omega_{of 6}$ in $All_{\Omega_{of 6}}(\mathscr{X_{udps}})$, there are precisely 3 \textit{unit dividons} $D$ in $\Omega_{of 6}$ where $\psi(\pi_2(D)) = 1$, and, for the remaining 3 $\Omega_{of 6}$ in $All_{\Omega_{of 6}}(\mathscr{X_{udps}})$, there are precisely 2 \textit{unit dividons} $D$ in $\Omega_{of 6}$ where $\psi(\pi_2(D)) = 0$, notationally
\begin{align}
\begin{split}
& |\{ \Omega_{of 6}  \in All_{\Omega_{of 6}}(\mathscr{X_{udps}}) :  |\{D \in \Omega_{of 6} : \psi(\pi_2(D)) = 0 \}| = 3 \}| = 2 \\
& |\{ \Omega_{of 6}  \in All_{\Omega_{of 6}}(\mathscr{X_{udps}}) :  |\{D \in \Omega_{of 6} : \psi(\pi_2(D)) = 0 \}| = 2 \}| = 3 \\
\end{split}
\end{align}
Using the same notation above, this time we would have $\{D_n^1,D_n^2\} = \Omega_{of 6_n} \cap D^*$ for $n \in \{1,2,3\}$ and $ \{D_n^1,D_n^2,D_n^3\} = \Omega_{of 6_n} \cap D^*$ for $n \in \{4,5\}$. In order for the coloring to satisfy \reff{hypergraph_law1} we need to ensure that \reff{same-divider-diving-points} holds for $n \in \{1,2,3\}$ and
\begin{equation}
\begin{gathered}\label{divider_one_in_common}
|\pi_1(D_n^1) \cap \pi_1(D_n^2) \cap \pi_1(D_n^3)| =1
\end{gathered}
\end{equation}
holds for $n \in \{4,5\}$. And to satisfy \reff{hypergraph_law2}, we also need to ensure that  \reff{exists-2-dividon-of-same-divider} holds as well. One way to go about achieving that is to let $D_4^x$ to have a common \textit{divider} as $D_5^x$ for all $x \in \{1,2\}$, while letting the remaining \textit{unit dividons} in $D_4$ and $D_5$, namely $D_4^3$  and $D_5^3$, to have a common \textit{divider} as $D_1^1$ and $D_2^1$ respectively, and the remaining \textit{unit dividons} in $D_1$ and $D_2$, namely $D_1^2$  and $D_2^2$, to have a common \textit{dividers} as the two \textit{dividons} in $D_3$ respectively.  That is to say, for some subsets of 2 cardinality, $A,B,C,D,E,F$ of $\pi_1(X_{udps})$, we have
\begin{equation}
\begin{gathered}
\pi_1(D_4^1) = \pi_1(D_5^1) = A\\
\pi_1(D_4^2) = \pi_1(D_5^2) = B\\
\pi_1(D_4^3) = \pi_1(D_1^1) = \bigcup \pi_2(D_1^2)  = C\\
\pi_1(D_5^3) = \pi_1(D_2^1) = \bigcup \pi_2(D_2^2)  = D\\
\pi_1(D_1^2) = \bigcup \pi_2(D_1^1) = \pi_1(D_3^1) =\bigcup \pi_2(D_3^2) = E \\
\pi_1(D_2^2) = \bigcup \pi_2(D_2^1) = \pi_1(D_3^2) =\bigcup \pi_2(D_3^1) = F \\
\end{gathered}
\end{equation}
where
\begin{gather*}
A \not=B \not=C \not=D \not=E \not=F \\
|A \cap B \cap C| = 1 \\
|A \cap B \cap D| = 1 \\
(C \cap E) = (D \cap F) = (E \cap F) = \varnothing
\end{gather*}
$\mathscr{X_{udps}}$ described above is in $\mathscr{UDPS}^{\Theta}$ because there exists $\mathscr{X_{dps}} \in \mathscr{DPS}^+$ where $\undervec{\mathscr{F}}_{\mathscr{udps}}^{\mathscr{DPS}}(\mathscr{X_{dps}})=\mathscr{X_{udps}}$: such $\mathscr{X_{dps}}$ would be isomorphic to $Conc_5^1$ defined in \reff{stronger-theorem-2}.
\item To construct a \textit{unit div point set} $\mathscr{X_{udps}}$ where precisely 4 $\Omega_{of 6}$ in $All_{\Omega_{of 6}}(\mathscr{X_{udps}})$ are isomorphic to $\pi_2(Conc_4^1)$ (with the remaining 1 $\Omega_{of 6}$ in $All_{\Omega_{of 6}}(\mathscr{X_{udps}})$ isomorphic to $\pi_2(Conv_4^1)$), this time we would need to make sure that
\begin{align}
\begin{split}
& |\{ \Omega_{of 6}  \in All_{\Omega_{of 6}}(\mathscr{X_{udps}}) :  |\{D \in \Omega_{of 6} : \psi(\pi_2(D)) = 0 \}| = 3 \}| = 4 \\
& |\{ \Omega_{of 6}  \in All_{\Omega_{of 6}}(\mathscr{X_{udps}}) :  |\{D \in \Omega_{of 6} : \psi(\pi_2(D)) = 0 \}| = 2 \}| = 1 \\
\end{split}
\end{align}
Using the same notation above, we would have $\{D_n^1,D_n^2\} = \Omega_{of 6_n} \cap D^*$ for $n \in \{1\}$ and $ \{D_n^1,D_n^2,D_n^3\} = \Omega_{of 6_n} \cap D^*$ for $n \in \{2,3,4,5\}$. In order for the coloring to satisfy \reff{hypergraph_law1}, we need to ensure that \reff{same-divider-diving-points} holds for $n \in \{1\}$ and \reff{divider_one_in_common} holds for $n \in \{2,3,4,5\}$. And to satisfy \reff{hypergraph_law2}, we also need to ensure that  \reff{exists-2-dividon-of-same-divider} holds as well. One way to go about satisfying these conditions is to let $D_4^x$ and $D_2^x$ to have a common \textit{divider} as $D_5^x$ and $D_3^x$ respecitvely, for $x \in \{1,2\}$, while letting the remaining \textit{unit dividons} in $D_2$, $D_4$ and $D_5$, namely $D_2^3$, $D_4^3$  and $D_5^3$, to have a common \textit{divider} as $D_1^2$, $D_3^3$ and $D_1^1$ respectively. That is to say, for some subsets of 2 cardinality, $A,B,C,D,E,F$ of $\pi_1(X_{udps})$, we have
\begin{equation}
\begin{gathered}
\pi_1(D_4^1) = \pi_1(D_5^1) =  A \\
\pi_1(D_4^2) = \pi_1(D_5^2) =  B \\
\pi_1(D_4^3) = \pi_1(D_3^3) =  C\\
\pi_1(D_3^1) = \pi_1(D_2^1) =  D \\
\pi_1(D_3^2) = \pi_1(D_2^2) =  E \\
\pi_1(D_5^3) = \pi_1(D_1^1) = \bigcup \pi_2(D_1^2) =  F\\
\pi_1(D_2^3) = \pi_1(D_1^2) = \bigcup \pi_2(D_1^1) =  G
\end{gathered}
\end{equation}
where
\begin{gather*}
A \not=B \not=C \not=D \not=E \not=F \not= G \\
|A \cap B \cap C| = 1 \\
|A \cap B \cap F| = 1 \\
|C \cap D \cap E | = 1 \\
|D \cap E \cap G | = 1 \\
F \cap G = \varnothing
\end{gather*}
$\mathscr{X_{udps}}$ described above is in $\mathscr{UDPS}^{\Theta}$ because there exists $\mathscr{X_{dps}} \in \mathscr{DPS}^+$ where $\undervec{\mathscr{F}}_{\mathscr{udps}}^{\mathscr{DPS}}(\mathscr{X_{dps}})=\mathscr{X_{udps}}$: such $\mathscr{X_{dps}}$ would be isomorphic to $Conc_5^2$ defined in \reff{stronger-theorem-2}.
\end{enumerate}
\end{proofpart}
\end{proof}
\begin{remark}
A stronger version of \textit{Theorem 2} would state that for all $\mathscr{X_{dps}} \in \mathscr{DPS}^+_5$, $\mathscr{X_{dps}}$ is either isomorphic to $Conv_5$, $Conc_5^1$ or $Conc_5^2$, where
\begin{gather}\label{stronger-theorem-2}
\Scale[0.9]{
\begin{split}
    Conv_5 = &(Cv_5, \Theta_{Cv_5}) \\
    Cv_5 = &\{1,2,3,4,5\} \\
    \Theta_{Cv_5} = & \{(\{1,2\},\{(\{3,4,5\},\varnothing\}), \\
    &\;\; (\{1,3\},\{\{2\},\{4,5\}\}), \\
    &\;\; (\{1,4\},\{\{2,3\},\{5\}\}), \\
    &\;\; (\{1,5\},\{\{2,3,4\},\varnothing\}), \\
    &\;\; (\{2,3\},\{\{1,4,5\},\varnothing\}), \\
    &\;\; (\{2,4\},\{\{1,5\},\{3\}\}), \\
    &\;\; (\{2,5\},\{\{1\},\{3,4\}\}), \\
    &\;\; (\{3,4\}, \{\{1,2,5\},\varnothing\}) \\
    &\;\; (\{3,5\}, \{\{1,2\},\{4\}\}) \\
    &\;\; (\{4,5\}, \{\{1,2,3\},\varnothing\})
 \end{split}
\begin{split}
    Conc_5^1 = &(Cc_5^1, \Theta_{Cv_5}) \\
    Cc_5^1 = &\{1,2,3,4,5\} \\
    \Theta_{Cv_5^1} = & \{(\{1,2\},\{(\{3,4,5\},\varnothing\}), \\
    &\;\; (\{1,3\},\{\{2\},\{4,5\}\}), \\
    &\;\; (\{1,4\},\{\{2,3,5\},\varnothing\}), \\
    &\;\; (\{1,5\},\{\{2,3\},\{4\}), \\
    &\;\; (\{2,3\},\{\{1,4,5\},\varnothing\}), \\
    &\;\; (\{2,4\},\{\{1,5\},\{3\}\}), \\
    &\;\; (\{2,5\},\{\{1\},\{3,4\}\}), \\
    &\;\; (\{3,4\}, \{\{1,2,5\},\varnothing\}) \\
    &\;\; (\{3,5\}, \{\{1,2\},\{4\}\}) \\
    &\;\; (\{4,5\}, \{\{1\},\{2,3\}\})
 \end{split}
 \begin{split}
    Conc_5^2  = &(Cc_5^2, \Theta_{Cc_5^2}) \\
    Cc_5^2 = &\{1,2,3,4,5\} \\
     \Theta_{Cv_5^2} = & \{(\{1,2\},\{(\{3,4,5\},\varnothing\}), \\
    &\;\; (\{1,3\},\{\{2\},\{4,5\}\}), \\
    &\;\; (\{1,4\},\{\{2,3,5\},\varnothing\}), \\
    &\;\; (\{1,5\},\{\{2,3\},\{4\}), \\
    &\;\; (\{2,3\},\{\{1,4\},\{5\}\}), \\
    &\;\; (\{2,4\},\{\{1,3,5\},\varnothing\}), \\
    &\;\; (\{2,5\},\{\{1\},\{3,4\}\}), \\
    &\;\; (\{3,4\}, \{\{1,5\},\{2\}\}) \\
    &\;\; (\{3,5\}, \{\{1,2\},\{4\}\}) \\
    &\;\; (\{4,5\}, \{\{1\},\{2,3\}\}) \end{split}
 }
\end{gather}
To prove this version of \textit{Theorem 2} we would need to prove that there exists no \textit{div point sets} in $\mathscr{DPS}^+_5$ not isomorphic to $Conv_5$, $Conc_5^1$ or $Conc_5^2$.
\end{remark}
\begin{remark} Let $All_{of\Omega}$ be a generalization of $All_{of \Omega_{6}}$ where $All_{of\Omega}(\mathscr{X},4) = All_{of\Omega_{6}}(\mathscr{X})$ i.e.
\begin{align}
All_{of\Omega}(\mathscr{X},n) = \{ \Omega_{of_{based\_on}}(R, \pi_2(\mathscr{X})) : R \in \mathcal{P}(\pi_1(\mathscr{X})) : |R| = n \}
\end{align}
by \textit{Theorem 2}, it is clear that following proposition is false:
\proposition{
A \textit{unit div point set} of 5 or more points $\mathscr{X_{udps}}$ is in $\mathscr{UDPS}^+$ iff all members of $All_{of\Omega_{of 6}}(\mathscr{X_{udps}},n)$ are in $\mathscr{UDPS}^+$, for any $n \in \mathbb{N}_{\geq 4}$ less than $|\pi_1(\mathscr{X_{udps}})|$.
}
However, this weaker version of it still holds true:
\proposition{
If $\mathscr{X_{udps}}$ is in $\mathscr{UDPS}^+$, all members of  $All_{of\Omega_{of 6}}(\mathscr{X_{udps}},n)$ are also in $\mathscr{UDPS}^+$ for any $n \in \mathbb{N}_{\geq 4}$ less than $|\pi_1(\mathscr{X_{udps}})|$.
}
There is undoubtedly some similarity between the false proposition above, and the following proposition which is too false:
\proposition{
A \textit{div point set} of 4 or more points, $\mathscr{X_{dps}}$, is in $\mathscr{DPS}^+$ iff all elements in $\mathscr{Sdps_{of}}(\mathscr{X_{dps}},n)$ are in $\mathscr{DPS}^+$, for any $n \in \mathbb{N}_{\geq 3}$ less than $|\pi_1(\mathscr{X_{dps}})|$.
}
Since it is vacuously true that any \textit{div point sets} of 3 points satisfy \reff{dividon_law1}, \reff{dividon_law2}, and \reff{dividon_law3}, we cannot conclude that a certain \textit{div point set} satisfies \reff{dividon_law1}, \reff{dividon_law2}, and \reff{dividon_law3} just because all its \textit{sub div point sets} of 3 points satisfy them. Now recall \textit{Lemma 3} where $E_2$ of the hypergraph based on $P$ is an empty set in the case when $|P|=4$ and, as a result, for such $E_2$, it is vacuously true that \reff{hypergraph_law2} always holds for any coloring, and thus we cannot conclude that a certain \textit{unit div point set} $\mathscr{X_{udps}}$ where $Col_{\mathscr{udps}}(\pi_2(\mathscr{X_{udps}}))$ satisfies \reff{hypergraph_law2}, just because all memebers of $All_{of\Omega_{of 6}}(\mathscr{X_{udps}},4)$ are isomorphic to some \textit{unit div point set} $\mathscr{A_{udps}}$ where $Col_{\mathscr{udps}}(\pi_2(\mathscr{X_{udps}}))$ satisfies  \reff{hypergraph_law2}.

It can be proven that the proposition regarding \textit{unit div point sets} above is true in the case when $n \in \mathbb{N}_{\geq 5}$, similar to how the proposition regarding \textit{div point sets} is true in the case when $n \in \mathbb{N}_{\geq 4}$.
\end{remark}
\subsection{\textit{convexity}}
The notion that there exists $n$ points forming a convex polygon among some set of points in $\mathbb{E}^2$ can be expressed through \textit{convexity} in the context of \textit{div point sets}.
\begin{defn}
A \textit{div point set} $\mathscr{X}$ has a \textit{convexity} of $n$ iff there exists a \textit{div point set} $\mathscr{X_{sub}}$ such that $\mathscr{X_{sub}}  \leq \mathscr{X} $ and $\mathscr{X_{sub}}$ is isomorphic to $Conv_n$ defined as follow
\begin{align}
\begin{split}\label{conv}
     & Conv_n  = (P,  \{ (d,\delta_{conv}(d,P))  : d \in \mathcal{P}(P) : |d| = 2\}) \\
     &\text{where }
     \begin{cases}
     &P = \{ x \in \mathbb{N}_{\geq 1} : x \leq n \} \\
    &\delta_{conv}(d,P) = \{ \{  p : p \in P : inside(p,d) \}, \{  p : p \in P : outside(p,d) \}\} \\
    &\text{where }
     \begin{cases}
    & inside(p,d) = ( p > min(d) \land p < max(d) ) \\
    & outside(p,d) = ( p < min(d) \lor p > max(d) ) \\
     &\text{where }
     \begin{cases}
    &\text{$min(d)$ returns the smallest number in $d$} \\
    &\text{$max(d)$ returns the biggest number in $d$}.
    \end{cases}
     \end{cases}
     \end{cases}
  \end{split}
  \end{align}
for any $n \in \mathbb{N}_{\geq 3}$. Here is an implementation of $Conv_n$ as a function in Haskell:
\begin{lstlisting}
import Data.List

combine :: Int -> [a] -> [[a]]
combine 0 _  = [[]]
combine n xs = [ y:ys | y:xs' <- tails xs, ys <- combine (n-1) xs']

convex:: Int -> ([Int],[([Int],[[Int]])])
convex n = (points, dividons)
    where
        points = [1..n]
        dividers = combine 2 points
        dividons = [(divider,[div1,div2])
            | divider@(a:b:_) <- dividers,
            let divs = points \\ divider,
            let div1 = [ x | x <- divs, x > a, x < b  ],
            let div2 = divs \\ div1 ]
\end{lstlisting}
\end{defn}
\begin{ax}
For any $\mathscr{X} \in \mathscr{DPS^+}$, $\mathscr{X}$ has an interpretation for $\pi_1(\mathscr{X})$ as some set of points in $E^2$ among which there exists $n$ points forming a convex polygon, iff $\mathscr{X}$ has a convexity of $n$. More precisely, there exists an interpretation for $P' \subseteq \pi_1(\mathscr{X})$ as some set of $|P'|$ points in $E^2$ forming a convex polygon iff $\mathscr{Sdps}(\mathscr{X},P')$ is isomorphic to $Conv_n$, for any $n \geq 3$.
\end{ax}
\begin{remark} One may notice that for $n \geq 4$, all \textit{sub div point sets} of $n-1$ points of $Conv_{n}$ are isomorphic to $Conv_{n-1}$, and as a consequence, a \textit{div point set} with a \textit{convexity} of $k$ would also have a convexity of $m$, for all $k$, $m$ in $\mathbb{N}_{\geq 3}$ where $m < k$. In Euclidean geometry, by \textit{Axiom 2}, that is equivalent to the following proposition: for any $n \geq 4$, after removing any one point from a set of $n$ points that are the vertices of a convex polygon, the remaining points too forms a convex polygon, and as a consequence, any set of points in general position containing $k$ points forming a convex polygon would also contain $m$ points forming a convex polygon, for all $k$, $m$ in $\mathbb{N}_{\geq 3}$ where $m < k$.
\end{remark}
\begin{remark}
We can conclude from \textit{Theorem 2} that a \textit{div point set} of 5 or more points always has a convexity of 4. By \textit{Axiom 2}, this means that we can always find 4 points forming a convex polygon in any set of 5 or more points in general position on an Euclidean plane, as stated in the Erdos-Szekeres conjecture (for the case when $n=4$).
\end{remark}
\section{ A reduction to a \textit{multiset unsatisfiability problem}}
The Erd{\"o}s-Szekeres conjecture can be expressed as a conjunction of \reff{lowerbound} and \reff{upperbound} in the theory of \textit{div point sets}.
\begin{align}
\begin{split} \label{lowerbound}
&\forall n \in \mathbb{N}_{\geq 3} \\
&\qquad \exists \mathscr{A} \in \mathscr{DPS}^+ \quad  |\pi_1(A)| = 2^{n-2} \ \land \forall \mathscr{A_s} \leq \mathscr{A} \quad \mathscr{A_s} \not\cong Conv_n
\end{split} \\
\begin{split} \label{upperbound}
&\forall n \in \mathbb{N}_{\geq 3} \\
&\qquad \forall \mathscr{A} \in \mathscr{DPS}^+ \quad  |\pi_1(A)| > 2^{n-2} \Leftrightarrow \exists \mathscr{A_s} \leq \mathscr{A} \quad \mathscr{A_s} \cong Conv_n
\end{split}
\end{align}
Since the lower bound has been proven to be $2^{n-2}+1$, all is left is to prove \reff{upperbound} and the conjecture would be proven.

\subsection{a combinatorial characteristics of \textit{sub div point sets}}

As we examine \textit{div point sets} of $v$ points for $v > 5$, we would notice this pretty interesting fact about \textit{sub div point sets}: for any natural number $a \geq 1$, let $\mathscr{SDPSS}$ be the set of all \textit{sub div point set} of $v-a$ points of any \textit{div point set} of $v$ points, for any $\mathscr{X_{SDPS}}$ in $\mathscr{SDPSS}$, we can always select $v-a$ distinct $(a+1)$-cardinality subsets of $\mathscr{SDPSS}$, each of which contains $\mathscr{X_{SDPS}}$ and other \textit{div point sets}, and all these \textit{div point sets} that it contains all have $\binom{v-a-1}{t}$ \textit{sub div point sets} of $t$ points in common, for any natural number $t \geq 1$. What is cool about this is that it can be generalized from $a+1$ to $a+b$ for any $b \geq 1$ as long as $a+b$ is smaller than $v$ (and in which case the \textit{div point sets} would have $\binom{v-a-b}{t}$ \textit{sub div point sets} of $t$ points in common). Notationally,
\begin{align}
\begin{split}\label{combinatorics}
&\forall \mathscr{X} \in \mathscr{DPS}^+ \\
&\qquad v \coloneqq |\pi_1(\mathscr{X})| \\
& \qquad \forall a \in \mathbb{N}_{\geq 1} \\
&\qquad \qquad \mathscr{SDPSS} \coloneqq \mathscr{Sdps_{of}}(\mathscr{X}, v-a)\\
&\qquad \qquad \forall  \mathscr{X_{SDPS}}  \in \mathscr{SDPSS} \\
&\qquad \qquad \qquad \forall b \in \{ x: x \in \mathbb{N}_{\geq 1} : x < v-a \}  \\
&\qquad \qquad \qquad \qquad \exists \mathscr{S} \in \mathcal{P_n}( \mathcal{P_n}(\mathscr{SDPSS}, a+b),v-a)\\
&\qquad \qquad \qquad \qquad \qquad \forall \mathscr{s} \in \mathscr{S} \\
&\qquad \qquad \qquad \qquad \qquad \qquad \mathscr{X_{SDPS}} \in \mathscr{s}\\
& \qquad \qquad \qquad \qquad \qquad \qquad  \forall t \in \mathbb{N}_{\geq 1}  \\
&\qquad  \qquad \qquad \qquad \qquad \qquad \qquad |\bigcap_{\mathcal{l} \in \mathscr{s}} \mathscr{Sdps_{of}}(\mathcal{l},t)| =   \binom{v-a-b}{t}\\
\end{split}
\end{align}
where
\begin{align}
\mathcal{P_n}(S,n) = \{ x : x \in \mathcal{P}(S) : |x| = n \}
\end{align}
To understand why such combinatorial characteristic exists, consider this: any 2 \textit{sub div point sets}, $\mathscr{S}_1$ and $\mathscr{S}_2$ of a certain \textit{div point set} is distinct iff they are of distinct points  i.e. $\mathscr{S}_1\not=\mathscr{S}_2 \Leftrightarrow \pi_1(\mathscr{S}_1)\not= \pi_1(\mathscr{S}_2)$, and thus \reff{combinatorics} is equivalent as stating that for any set $\mathcal{N}$ with the same cardinality as $\mathbb{N}$,
\begin{align}
\begin{split}\label{natural-combinatorics}
&\forall X \in \mathcal{P}(\mathcal{N}) \\
&\qquad v \coloneqq |X| \\
& \qquad \forall a \in \mathbb{N}_{\geq 1} \\
&\qquad \qquad X_{subset\_set} \coloneqq \mathcal{P_n}(X, v-a)\\
&\qquad \qquad \forall X_{subset}  \in X_{subset\_set} \\
&\qquad \qquad \qquad \forall b \in \{ x: x \in \mathbb{N}_{\geq 1} : x < v-a \} \\
&\qquad \qquad \qquad \qquad \exists S \in \mathcal{P_n}( \mathcal{P_n}(X_{subset\_set}, a+b),v-a)\\
&\qquad \qquad \qquad \qquad \qquad \forall s \in S \\
&\qquad \qquad \qquad \qquad \qquad \qquad X_{subset} \in s\\
& \qquad \qquad \qquad \qquad \qquad \qquad  \forall n \in \mathbb{N}_{\geq 1}  \\
&\qquad  \qquad \qquad \qquad \qquad \qquad \qquad |\bigcap_{l \in s} \mathcal{P_n}(l,n)| =   \binom{v-a-b}{n}\\
\end{split}
\end{align}
For the purpose of illustration, suppose we have some \textit{div point set} of 9 points $\mathscr{X_9}$, let $isom$ be a bijective function from $\mathscr{Sdps_{of}}(\mathscr{X_9},4)$ to a set of natural numbers $N$ where $N = \{ n: n \in \mathbb{N}_{\geq 1} : n \leq |\mathscr{Sdps_{of}}(\mathscr{X_9},4)|\}$,  the set
\begin{align}
\{ \{ iso(\mathscr{X_4}) : \mathscr{X_4} \in \mathscr{Sdps_{of}}(\mathscr{X_5},4) \} :\mathscr{X_5} \in \mathscr{Sdps_{of}}(\mathscr{X_9},5) \}
\end{align}
shows how \textit{sub div point sets} of 4 points of $\mathscr{X_9}$ (each represented by a distinct natural number) would be disturbed among \textit{sub div point sets} of 5 points of $\mathscr{X_9}$ and is isomorphic$\color{black}^*$ to:
{\tiny
\begin{align*}\begin{split}
&\{\{1,2,7,22,57\},\{1,3,8,23,58\},\{1,4,9,24,59\},\{1,5,10,25,60\},\{1,6,11,26,61\},\{2,3,12,27,62\},\\
&\{2,4,13,28,63\},\{2,5,14,29,64\},\{2,6,15,30,65\},\{3,4,16,31,66\},\{3,5,17,32,67\},\{3,6,18,33,68\},\\
&\{4,5,19,34,69\},\{4,6,20,35,70\},\{5,6,21,36,71\},\{7,8,12,37,72\},\{7,9,13,38,73\},\{7,10,14,39,74\},\\
&\{7,11,15,40,75\},\{8,9,16,41,76\},\{8,10,17,42,77\},\{8,11,18,43,78\},\{9,10,19,44,79\},\{9,11,20,45,80\},\\
&\{10,11,21,46,81\},\{12,13,16,47,82\},\{12,14,17,48,83\},\{12,15,18,49,84\},\{13,14,19,50,85\},\{13,15,20,51,86\},\\
&\{14,15,21,52,87\},\{16,17,19,53,88\},\{16,18,20,54,89\},\{17,18,21,55,90\},\{19,20,21,56,91\},\{22,23,27,37,92\},\\
&\{22,24,28,38,93\},\{22,25,29,39,94\},\{22,26,30,40,95\},\{23,24,31,41,96\},\{23,25,32,42,97\},\{23,26,33,43,98\},\\
&\{24,25,34,44,99\},\{24,26,35,45,100\},\{25,26,36,46,101\},\{27,28,31,47,102\},\{27,29,32,48,103\},\{27,30,33,49,104\},\\
&\{28,29,34,50,105\},\{28,30,35,51,106\},\{29,30,36,52,107\},\{31,32,34,53,108\},\{31,33,35,54,109\},\{32,33,36,55,110\},\\
&\{34,35,36,56,111\},\{37,38,41,47,112\},\{37,39,42,48,113\},\{37,40,43,49,114\},\{38,39,44,50,115\},\{38,40,45,51,116\},\\
&\{39,40,46,52,117\},\{41,42,44,53,118\},\{41,43,45,54,119\},\{42,43,46,55,120\},\{44,45,46,56,121\},\{47,48,50,53,122\},\\
&\{47,49,51,54,123\},\{48,49,52,55,124\},\{50,51,52,56,125\},\{53,54,55,56,126\},\{57,58,62,72,92\},\{57,59,63,73,93\},\\
&\{57,60,64,74,94\},\{57,61,65,75,95\},\{58,59,66,76,96\},\{58,60,67,77,97\},\{58,61,68,78,98\},\{59,60,69,79,99\},\\
&\{59,61,70,80,100\},\{60,61,71,81,101\},\{62,63,66,82,102\},\{62,64,67,83,103\},\{62,65,68,84,104\},\{63,64,69,85,105\},\\
&\{63,65,70,86,106\},\{64,65,71,87,107\},\{66,67,69,88,108\},\{66,68,70,89,109\},\{67,68,71,90,110\},\{69,70,71,91,111\},\\
&\{72,73,76,82,112\},\{72,74,77,83,113\},\{72,75,78,84,114\},\{73,74,79,85,115\},\{73,75,80,86,116\},\{74,75,81,87,117\},\\
&\{76,77,79,88,118\},\{76,78,80,89,119\},\{77,78,81,90,120\},\{79,80,81,91,121\},\{82,83,85,88,122\},\{82,84,86,89,123\},\\
&\{83,84,87,90,124\},\{85,86,87,91,125\},\{88,89,90,91,126\},\{92,93,96,102,112\},\{92,94,97,103,113\},\{92,95,98,104,114\},\\
&\{93,94,99,105,115\},\{93,95,100,106,116\},\{94,95,101,107,117\},\{96,97,99,108,118\},\{96,98,100,109,119\}, \\
&\{97,98,101,110,120\},
\{99,100,101,111,121\},\{102,103,105,108,122\},\{102,104,106,109,123\},\{103,104,107,110,124\},\\
&\{105,106,107,111,125\}, \{108,109,110,111,126\}, \{112,113,115,118,122\},\{112,114,116,119,123\},\{113,114,117,120,124\},\\
&\{115,116,117,121,125\}, \{118,119,120,121,126\},\{122,123,124,125,126\}\}\end{split}
\end{align*}
}Notice how for every $\mathscr{X_5} \in  \mathscr{Sdps_{of}}(\mathscr{X_9},5)$, there exists a set in $\mathcal{P_n}(\mathcal{P_n}(\mathscr{Sdps_{of}}(\mathscr{X_9},5),5),5)$ whose elements are distinct subsets of $\mathscr{Sdps_{of}}(\mathscr{X_9},5)$, each containing $\mathscr{X_5}$ and other \textit{div point sets} all having a common \textit{sub div point set} of 4 points.
\\ \\
We believe that \reff{upperbound} is simply an elegant result of having a structure, whose sub-structures possess the combinatorial characteristic described above, that satisfies a certain constraint, which, in this case, is that described in \reff{assign5} below.

\begin{note}
\textit{isomorphic$\color{black}^*$}: the isomorphism here is defined as a bijective function $f$ from $\mathbb{N}$ to $\mathbb{N}$ such that $f^{members^{2}}(A) = B$.
\end{note}

\subsection{the problem $\color{black}UNSAT_{multiset}^{\mathscr{DPS}^+}$}
\begin{defn}$UNSAT_{multiset}$ is the decision problem of determining if there exists no value-assignment for all variables in $V$, distributed in a certain manner among the multisets in $M$, such that it satisfies the FOL formulae in $C$, where the value-assignment is defined to be a function $Z$: for all $v$ in $V$, $Z(v)= x$ for some $x \in D$. Here $D$, often referred to as the domain, is the set of values a variable can be assigned to. An instance of $UNSAT_{multiset}$ can thus be represented as a 4-tuple $(V,D,M,C)$.
\end{defn}
\begin{prelude}[Definition 9] We shall now present the problem $UNSAT_{multiset}^{\mathscr{DPS}^+}$, a special case of $UNSAT_{multiset}$, of which, if an instance is solved (\textit{solved} in the sense that it is proven that the formulae in $F$ are unsatisfiable), it would prove that, for a particular $n \in \mathbb{N}_{\geq 5}$ (depending on which instance of $UNSAT_{multiset}^{\mathscr{DPS}^+}$ is solved), there exists no \textit{div point set} of $2^{n-2}+1$ points $\mathscr{X}$ that satisfies
\begin{align}
\begin{split}\label{assign5}
&\forall \mathscr{X_5} \in \mathscr{Sdps_{of}}(\mathscr{X},5) \\
&\qquad [Assign(\mathscr{X_4}) : \mathscr{X_4} \in \mathscr{Sdps_{of}}(\mathscr{X_5},4) ] \in \{ [1,1,1,1,0], [1,1,0,0,0], [0,0,0,0,0] \}
\end{split}
\end{align}
but does not satisfy
\begin{align}
\begin{split}\label{n-conv}
& \exists \mathscr{A_s} \in \mathscr{Sdps_{of}}(\mathscr{X},n) \\
& \qquad \forall \mathscr{A_{ss}} \in \mathscr{Sdps_{of}}( \mathscr{A_s} ,4) \quad Assign(\mathscr{A_{ss}}) = 0
\end{split}
\end{align}
where
\begin{align}
\begin{split}
Assign(\mathscr{A}) =
\begin{cases}
   1 & \text{if } \;  \quad   \mathscr{A} \cong Conc_4^1  \\
    0 & \text{if } \;  \quad  \mathscr{A} \cong Conv_4  \\
\end{cases}
\end{split}
\end{align}
consequently proving that the proposition after the universal quantifier in \reff{upperbound} holds for that particular $n$, and thus the $n$-instance of Erd{\"o}s-Szekeres conjecture. This is because if a \textit{div point set} of 5 or more points $\mathscr{X}$ is in $\mathscr{DPS}^+$, by \textit{Theorem 2}, $\mathscr{X}$ satisfy \reff{assign5}. If there exists no \textit{div point set} of $2^{n-2}+1$ points that satisfies \reff{assign5} but not \reff{n-conv}, it would indicate that every \textit{div point set} of $2^{n-2}+1$ or more points in $\mathscr{DPS}^+$ satisfy \reff{n-conv} and therefore has a convexity of $n$. (Note that \reff{upperbound} can be rewritten as follows
\begin{align}\label{upperbound2}
\begin{split}
&\forall n \in \mathbb{N}_{\geq 3} \\
& \qquad\forall \mathscr{A} \in \mathscr{DPS}^+ \\
& \qquad\qquad  |\pi_1(A)| > 2^{n-2} \\
&  \qquad\qquad \Leftrightarrow \exists \mathscr{A_s} \in \mathscr{Sdps_{of}}(\mathscr{A},n) \\
& \qquad \qquad \qquad \forall \mathscr{A_{ss}} \in \mathscr{Sdps_{of}}(\mathscr{A_s},4) \quad Assign(\mathscr{A_{ss}}) = 0
\end{split}
\end{align}
since any \textit{sub div point set} of $k$ points of any $Conv_n$ is isomorphic to $Conv_k$ for all $k,n \in \mathbb{N}_{\geq 3}$ where $n \geq k$).
\end{prelude}
\begin{defn}
$UNSAT_{multiset}^{\mathscr{DPS}^+}$ is a special case, or subproblem, of $UNSAT_{multiset}$ (\textit{subproblem} in the sense that all instances of $UNSAT_{multiset}^{\mathscr{DPS}^+}$ are instances of  $UNSAT_{multiset}$). An instance of $UNSAT_{multiset}$, $(V,D,M,C)$, is an instance of $UNSAT_{multiset}^{\mathscr{DPS}^+}$ iff for some $n \geq 5$,
\begin{equation}
\begin{gathered}
|V| =  \binom{2^{n-2}+1}{4} \\
D = \{0,1\} \\
M = A \cup B \\
\end{gathered}
\end{equation}
and $A$ is a set of 5-cardinality multisets while $B$ is a set of $n$-cardinality multisets, and the variables in $V$ are distributed in $m \in A$ the same way as how elements in $\mathscr{Sdps_{of}}(\mathscr{X},4)$ are distributed in $\mathscr{X_{SPDPS}} \in\mathscr{Sdps_{of}}(\mathscr{X},5)$, while the variables are distributed in $m \in B$ the same way as how elements in $\mathscr{Sdps_{of}}(\mathscr{X},4)$ are distributed in $\mathscr{X_{SPDPS}} \in\mathscr{Sdps_{of}}(\mathscr{X},n)$, where $\mathscr{X}$ is a \textit{div point set} of $2^n+1$ points, and $C$ consists of formulae \ref{constraint1} and \ref{constraint2}.
\begin{align}
\label{constraint1}
&\forall a \in A \qquad a \in \{ [1,1,1,1,0],  [1,1,0,0,0] ,[0,0,0,0,0] \} \\
\label{constraint2}
&\forall b \in B \qquad b \not =  \underbrace{[0,0,0,...,0,0]}_{\binom{n}{4} \; 0's}
\end{align}
The distribution of variables in $A$ and $B$ can be implement in Haskell as follows:
\begin{lstlisting}
import Data.List
import Data.Maybe
type Multiset = [Integer]

merge (a:x) (b:y) = (a,b) : merge x y
merge [] _ = []

choose :: Integer -> Integer -> Integer
n `choose` k
    | k < 0     = 0
    | k > n     = 0
    | otherwise = factorial n `div` (factorial k * factorial (n-k))

factorial :: Integer -> Integer
factorial n =  foldl (*) 1 [1..n]

combine :: Integer -> [Integer] -> [[Integer]]
combine 0 _  = [[]]
combine n xs = [ y:ys | y:xs' <- tails xs, ys <- combine (n-1) xs']

number_of_points = (\n->(2^(n-2)+1))

n_setOf_m_Multisets:: Integer -> Integer -> [Multiset]
n_setOf_m_Multisets m n = [ map fromJust $ map ((flip lookup) encoding)
    (combine 4 m_points) | m_points <- combine n [1..m] ]
    where
        encoding = merge (combine 4 [1..m]) [1..(m `choose` 4)]

setA :: Integer -> [Multiset]
setA n = n_setOf_m_Multisets (number_of_points n) 5

setB :: Integer -> [Multiset]
setB n  = [ x | x <- n_setOf_m_Multisets (number_of_points n) n, 2 `elem` x ]
\end{lstlisting}
\begin{remark} A different implementation may result in a different $M$ for the same $n$. Nonetheless, the different $M$ obtained from a different implementation would be isomorphic to the $M$ obtained from this implementation, in which case we would consider that distribution to be the same. Thus as far as unsatisfiability is concerned, for every $n \in \mathbb{N}_{\geq 5}$, there exists exactly one instance of $UNSAT_{multiset}^{\mathscr{DPS}^+}$.
\end{remark}
\begin{remark}
Each variable in $V$ represents $Assign(\mathscr{X_4})$ for a particular element $\mathscr{X_4} \in \mathscr{Sdps_{of}}(\mathscr{X},4)$ where $\mathscr{X}$ is a \textit{div point set} of $2^{n-2}+1$ points for some $n \in \mathbb{N}_{\geq 5}$. If there exists no value-assignment $Z$ satisfying formulae in $C$, we can be certain that there exists no \textit{div point set} of $2^{n-2}+1$ points $\mathscr{X}$ satisfying \reff{assign5} but not \reff{n-conv} as mentioned above and consequently proving the $n$-instance of conjecture.
\end{remark}
\begin{remark}
Here is the simplest instance of $UNSAT_{multiset}^{\mathscr{DPS}^+}$ (when $n=5$): since $A=B$, we have $|M|=|A|=|B|=\binom{2^{5-2}+1}{5}=126$ multisets, and $|V| = \binom{2^{5-2}+1}{4} = 126$ variables as well (with each denoted by $v_n$ below), distributed among the multisets in $M$ as follows: {\tiny
\begin{align*}\begin{split}
&M = \{[v_{1},v_{2},v_{7},v_{22},v_{57}],[v_{1},v_{3},v_{8},v_{23},v_{58}],[v_{1},v_{4},v_{9},v_{24},v_{59}],[v_{1},v_{5},v_{10},v_{25},v_{60}],[v_{1},v_{6},v_{11},v_{26},v_{61}],[v_{2},v_{3},v_{12},v_{27},v_{62}],\\
&[v_{2},v_{4},v_{13},v_{28},v_{63}],[v_{2},v_{5},v_{14},v_{29},v_{64}],[v_{2},v_{6},v_{15},v_{30},v_{65}],[v_{3},v_{4},v_{16},v_{31},v_{66}],[v_{3},v_{5},v_{17},v_{32},v_{67}],[v_{3},v_{6},v_{18},v_{33},v_{68}],\\
&[v_{4},v_{5},v_{19},v_{34},v_{69}],[v_{4},v_{6},v_{20},v_{35},v_{70}],[v_{5},v_{6},v_{21},v_{36},v_{71}],[v_{7},v_{8},v_{12},v_{37},v_{72}],[v_{7},v_{9},v_{13},v_{38},v_{73}],[v_{7},v_{10},v_{14},v_{39},v_{74}],\\
&[v_{7},v_{11},v_{15},v_{40},v_{75}],[v_{8},v_{9},v_{16},v_{41},v_{76}],[v_{8},v_{10},v_{17},v_{42},v_{77}],[v_{8},v_{11},v_{18},v_{43},v_{78}],[v_{9},v_{10},v_{19},v_{44},v_{79}],[v_{9},v_{11},v_{20},v_{45},v_{80}],\\
&[v_{10},v_{11},v_{21},v_{46},v_{81}],[v_{12},v_{13},v_{16},v_{47},v_{82}],[v_{12},v_{14},v_{17},v_{48},v_{83}],[v_{12},v_{15},v_{18},v_{49},v_{84}],[v_{13},v_{14},v_{19},v_{50},v_{85}],[v_{13},v_{15},v_{20},v_{51},v_{86}],\\
&[v_{14},v_{15},v_{21},v_{52},v_{87}],[v_{16},v_{17},v_{19},v_{53},v_{88}],[v_{16},v_{18},v_{20},v_{54},v_{89}],[v_{17},v_{18},v_{21},v_{55},v_{90}],[v_{19},v_{20},v_{21},v_{56},v_{91}],[v_{22},v_{23},v_{27},v_{37},v_{92}],\\
&[v_{22},v_{24},v_{28},v_{38},v_{93}],[v_{22},v_{25},v_{29},v_{39},v_{94}],[v_{22},v_{26},v_{30},v_{40},v_{95}],[v_{23},v_{24},v_{31},v_{41},v_{96}],[v_{23},v_{25},v_{32},v_{42},v_{97}],[v_{23},v_{26},v_{33},v_{43},v_{98}],\\
&[v_{24},v_{25},v_{34},v_{44},v_{99}],[v_{24},v_{26},v_{35},v_{45},v_{100}],[v_{25},v_{26},v_{36},v_{46},v_{101}],[v_{27},v_{28},v_{31},v_{47},v_{102}],[v_{27},v_{29},v_{32},v_{48},v_{103}],[v_{27},v_{30},v_{33},v_{49},v_{104}],\\
&[v_{28},v_{29},v_{34},v_{50},v_{105}],[v_{28},v_{30},v_{35},v_{51},v_{106}],[v_{29},v_{30},v_{36},v_{52},v_{107}],[v_{31},v_{32},v_{34},v_{53},v_{108}],[v_{31},v_{33},v_{35},v_{54},v_{109}],[v_{32},v_{33},v_{36},v_{55},v_{110}],\\
&[v_{34},v_{35},v_{36},v_{56},v_{111}],[v_{37},v_{38},v_{41},v_{47},v_{112}],[v_{37},v_{39},v_{42},v_{48},v_{113}],[v_{37},v_{40},v_{43},v_{49},v_{114}],[v_{38},v_{39},v_{44},v_{50},v_{115}],[v_{38},v_{40},v_{45},v_{51},v_{116}],\\
&[v_{39},v_{40},v_{46},v_{52},v_{117}],[v_{41},v_{42},v_{44},v_{53},v_{118}],[v_{41},v_{43},v_{45},v_{54},v_{119}],[v_{42},v_{43},v_{46},v_{55},v_{120}],[v_{44},v_{45},v_{46},v_{56},v_{121}],[v_{47},v_{48},v_{50},v_{53},v_{122}],\\
&[v_{47},v_{49},v_{51},v_{54},v_{123}],[v_{48},v_{49},v_{52},v_{55},v_{124}],[v_{50},v_{51},v_{52},v_{56},v_{125}],[v_{53},v_{54},v_{55},v_{56},v_{126}],[v_{57},v_{58},v_{62},v_{72},v_{92}],[v_{57},v_{59},v_{63},v_{73},v_{93}],\\
&[v_{57},v_{60},v_{64},v_{74},v_{94}],[v_{57},v_{61},v_{65},v_{75},v_{95}],[v_{58},v_{59},v_{66},v_{76},v_{96}],[v_{58},v_{60},v_{67},v_{77},v_{97}],[v_{58},v_{61},v_{68},v_{78},v_{98}],[v_{59},v_{60},v_{69},v_{79},v_{99}],\\
&[v_{59},v_{61},v_{70},v_{80},v_{100}],[v_{60},v_{61},v_{71},v_{81},v_{101}],[v_{62},v_{63},v_{66},v_{82},v_{102}],[v_{62},v_{64},v_{67},v_{83},v_{103}],[v_{62},v_{65},v_{68},v_{84},v_{104}],[v_{63},v_{64},v_{69},v_{85},v_{105}],\\
&[v_{63},v_{65},v_{70},v_{86},v_{106}],[v_{64},v_{65},v_{71},v_{87},v_{107}],[v_{66},v_{67},v_{69},v_{88},v_{108}],[v_{66},v_{68},v_{70},v_{89},v_{109}],[v_{67},v_{68},v_{71},v_{90},v_{110}],[v_{69},v_{70},v_{71},v_{91},v_{111}],\\
&[v_{72},v_{73},v_{76},v_{82},v_{112}],[v_{72},v_{74},v_{77},v_{83},v_{113}],[v_{72},v_{75},v_{78},v_{84},v_{114}],[v_{73},v_{74},v_{79},v_{85},v_{115}],[v_{73},v_{75},v_{80},v_{86},v_{116}],[v_{74},v_{75},v_{81},v_{87},v_{117}],\\
&[v_{76},v_{77},v_{79},v_{88},v_{118}],[v_{76},v_{78},v_{80},v_{89},v_{119}],[v_{77},v_{78},v_{81},v_{90},v_{120}],[v_{79},v_{80},v_{81},v_{91},v_{121}],[v_{82},v_{83},v_{85},v_{88},v_{122}],[v_{82},v_{84},v_{86},v_{89},v_{123}],\\
&[v_{83},v_{84},v_{87},v_{90},v_{124}],[v_{85},v_{86},v_{87},v_{91},v_{125}],[v_{88},v_{89},v_{90},v_{91},v_{126}],[v_{92},v_{94},v_{97},v_{103},v_{113}],[v_{92},v_{95},v_{98},v_{104},v_{114}],\\
&[v_{92},v_{93},v_{96},v_{102},v_{112}],[v_{93},v_{94},v_{99},v_{105},v_{115}],[v_{93},v_{95},v_{100},v_{106},v_{116}],[v_{94},v_{95},v_{101},v_{107},v_{117}],[v_{96},v_{97},v_{99},v_{108},v_{118}], \\
&[v_{96},v_{98},v_{100},v_{109},v_{119}],[v_{97},v_{98},v_{101},v_{110},v_{120}],
[v_{99},v_{100},v_{101},v_{111},v_{121}],[v_{102},v_{103},v_{105},v_{108},v_{122}],[v_{102},v_{104},v_{106},v_{109},v_{123}],\\
&[v_{103},v_{104},v_{107},v_{110},v_{124}],[v_{105},v_{106},v_{107},v_{111},v_{125}], [v_{108},v_{109},v_{110},v_{111},v_{126}], [v_{112},v_{113},v_{115},v_{118},v_{122}],[v_{112},v_{114},v_{116},v_{119},v_{123}],\\
&[v_{113},v_{114},v_{117},v_{120},v_{124}],[v_{115},v_{116},v_{117},v_{121},v_{125}], [v_{118},v_{119},v_{120},v_{121},v_{126}],[v_{122},v_{123},v_{124},v_{125},v_{126}]\}
\end{split}
\end{align*}
}%
It is no surprise that the distribution of variables in $m \in M$ above is exactly that of \textit{sub div point sets} of 4 points in $\mathscr{X}_5 \in \mathscr{Sdps_{of}}(\mathscr{X_9},5)$ as shown above.
\end{remark}
\end{defn}

\begin{remark} $UNSAT_{multiset}^{\mathscr{DPS}^+}$ can be reduced into the boolean unsatisfiability problem, the complement of $SAT$, in a rather straightforward manner by first converting each multiset in $A$ into the DNF formula below:
\begin{align}
\bigvee_{v_0 \in \mathcal{A}} (\neg v_0 \land \bigwedge_{v_1 \in \mathcal{A} \setminus \{v_0\}} v_1  ) \lor \bigvee_{\mathcal{A}_{|3|} \in \mathcal{A}_{|3|}^*}( \bigwedge_{v_0 \in \mathcal{A}_3} \neg v_0 \land  \bigwedge_{v_1 \in V \setminus \mathcal{A}_{|3|}} v_1  )  \lor  ( \bigwedge_{v_0 \in \mathcal{A}} \neg v_0  )
\end{align}
where $\mathcal{A}_{|3|}^* = \{ \mathcal{A}_{|3|} \in \mathbb{P}(\mathcal{A}): |\mathcal{A}_{|3|}|=3 \}$ and $\mathcal{A}$ denotes the set of variables in each multiset, and each multiset in $B$ into the DNF formula
 below:\begin{align}
\bigvee_{v \in \mathcal{B}} v
\end{align}
where $\mathcal{B}$ denotes the set of variables in each multiset, then joining all these DNF formulae conjunctively. One may realize that, in the case when $\mathcal{B}=\mathcal{A}$ ,the conjunction of $\bigvee_{v \in \mathcal{B}} v$ and $\bigwedge_{v \in \mathcal{A}} \neg v$ gives a tautology, and thus for the instance of $UNSAT_{multiset}^{\mathscr{DPS}^+}$ where $n=5$, we would have a simpler propositional formula. The same observation can be made in the FOL formulae of such $UNSAT_{multiset}^{\mathscr{DPS}^+}$ instance wherein satisfying both \reff{constraint1}  and \reff{constraint2} is equivalent to satisfying $\forall a \in A \; \; a \in \{ [1,1,1,1,0],  [1,1,0,0,0] \}$.
\end{remark}
\begin{remark}
We thereby conclude that a plausible approach to proving the upper-bound of the Erd{\"o}s-Szekeres conjecture through $UNSAT_{multiset}^{\mathscr{DPS}^+}$ is by induction i.e. we start of by solving the instance of $UNSAT_{multiset}^{\mathscr{DPS}^+}$ where $n=5$ - apparently accomplishable with a modern SAT solver - and then we prove the inductive hypothesis that $\forall m \in \mathbb{N}_{\geq 5} \; \; \mathcal{UNSAT}(m) \Rightarrow \mathcal{UNSAT}(m+1)$ where $\mathcal{UNSAT}(n)$ denotes the unsatisfiability of the $n$-instance of $UNSAT_{multiset}^{\mathscr{DPS}^+}$.
\end{remark}
\begin{remark}
The Erd{\"o}s-Szekeres conjecture would not be disproven even if a certain instance of $UNSAT_{multiset}^{\mathscr{DPS}^+}$ turns out to yield $False$ (i.e. it is satisfiable), since satisfying the constraints only implies that there exists a \textit{div point set} of $2^{n-2}+1$ points for a particular $n \in \mathbb{N}_{\geq 5}$ where
\begin{enumerate}[I.]
\item none of its \textit{sub div point sets} of $n$ points is isomorphic to $Conv_n$
\item each of its \textit{sub div point sets} of 5 points has 4, 2 or 0 distinct \textit{sub div point sets} of 4 points isomorphic to $Conc_4^1$
\end{enumerate}
from which we cannot conclude that such \textit{div point set} is in $\mathscr{DPS}^+$, unless it too satisfies the stronger version of \textit{Theorem 2} i.e. unless proven so, we should not rule out the possibility for some of its \textit{sub div point sets} of 5 points to not be in $\mathscr{DPS}^+$ despite themselves having 4, 2 or 0 distinct \textit{sub div point sets} of 4 points isomorphic to $Conc_4^1$ (with the remaining isomorphic to $Conv_4$).

To disprove the Erd{\"o}s-Szekeres conjecture, not only do we need to show that \reff{upperbound} is false, we need to demonstrate there exists no other constraints besides \reff{dividon_law1}, \reff{dividon_law2}, and \reff{dividon_law3} $\mathscr{X} \in \mathscr{DPS}^*$ has to satisfy such that there exists an interpretation for $\pi_1(\mathscr{X})$ as some set of points in $\mathbb{E}^2$ i.e. \textit{Axiom 1}'s consistency with Euclidean geometry.
\end{remark}

\end{document}